\newcommand{\ra}[1]{\renewcommand{\arraystretch}{#1}}
\newtheorem{theorem}{Theorem}
\newtheorem{corollary}[theorem]{Corollary}
\newtheorem{example}[theorem]{Example}
\newtheorem{lemma}[theorem]{Lemma}
\newtheorem{proposition}[theorem]{Proposition}
\newtheorem{remark}[theorem]{Remark}
\begin{document}

\title{A Quantitative Functional Central Limit Theorem for Shallow Neural
Networks}
\author{Valentina Cammarota\thanks{%
Department of Statistics, University of Rome La Sapienza, email:
valentina.cammarota@uniroma1.it}, Domenico Marinucci\thanks{%
Department of Mathematics, University of Rome Tor Vergata, email:
marinucc@mat.uniroma2.it}, \and Michele Salvi\thanks{%
Department of Mathematics, University of Rome Tor Vergata, email:
salvi@mat.uniroma2.it} and Stefano Vigogna\thanks{%
Department of Mathematics, University of Rome Tor Vergata, email
vigogna@mat.uniroma2.it}}
\maketitle

\begin{abstract}
We prove a quantitative functional central limit theorem for one-hidden-layer neural networks with generic activation function. Our rates of
convergence depend heavily on the smoothness of the
activation function, and they range from logarithmic for non-differentiable
non-linearities such as the ReLu to $\sqrt{n}$ for highly regular activations. Our main
tools are based on functional versions of the Stein-Malliavin method; in
particular, we rely on a quantitative functional central limit
theorem which has been recently established by Bourguin and Campese (2020).

\begin{itemize}
\item Keywords and Phrases: Quantitative Functional Central Limit Theorems,
Wiener-Chaos Expansions, Neural Networks, Gaussian Processes

\item AMS Classification: 60F17, 68T07, 60G60
\end{itemize}
\end{abstract}

\section{Introduction and background \label{Introduction}}

In this paper, we shall be concerned with one-hidden-layer neural networks with
Gaussian initialization, that is random fields $F:\mathbb{S}%
^{d-1}\rightarrow \mathbb{R}$ of the form
\begin{equation*}
F(x)=\frac{1}{\sqrt{n}}\sum_{j=1}^{n}V_{j}\sigma \left(\sum_{\ell =1}^{d}W_{j\ell
}x_{\ell }\right)=\frac{1}{\sqrt{n}}\sum_{j=1}^{n}V_{j}\sigma (W_{j}x)\text{ , }
\end{equation*}%
where $V_{j} \in \mathbb{R} ,W_{j} \in \mathbb{R}^{1\times d}$ are, respectively, random variables and vectors, whose entries are independent Gaussian with
zero mean and variance $\mathbb{E}[ V_{j}^{2}] =\mathbb{E}[
W_{j\ell }^{2}] =1,$ $j=1,...,n,$ $\ell =1,...,d.$ Here, $\sigma:
\mathbb{R\rightarrow }\mathbb{R}$ is an activation function whose properties
and form we will discuss below.

The random field $F$ is defined on the unit sphere $\mathbb{S}^{d-1},$ with
zero mean and covariance function such that, for any pair $x_{1},x_{2}\in
\mathbb{S}^{d-1}$,
\begin{equation*}
\left\{ S(x_{1},x_{2})\right\} :=\mathbb{E}[F(x_{1})F(x_{2})]=\mathbb{E}%
[\sigma (W_{j}x_{1})\sigma (W_{j}x_{2})]\text{ .}
\end{equation*}%
The function $\left\{ S(\cdot,\cdot)\right\} $ is actually isotropic and we can write%
\begin{equation*}
\left\{ S(x_{1},x_{2})\right\} =:\Gamma (\left\langle
x_{1},x_{2}\right\rangle )\qquad \text{ for some function }\Gamma
:[-1,1]\rightarrow \mathbb{R}\text{ ,}
\end{equation*}%
where $\left\langle x_{1},x_{2}\right\rangle $ denotes the standard Euclidean
inner product. For the sake of brevity and simplicity, in this paper we
restrict our attention to univariate neural networks; the extension to the
multivariate case can be obtained along similar lines, up to a normalizing constant depending
on the output dimension.

Our aim in this paper is to establish a quantitative functional central
limit theorem, that is, to study the distance with a suitable probability
metric between the random field $F$ and a Gaussian random field on the
sphere $\mathbb{S}^{d-1}$ with mean zero and covariance function $S:%
\mathbb{S}^{d-1}\times \mathbb{S}^{d-1}\rightarrow \mathbb{R}$.

The distribution of neural networks under random initialization is a
classical topic in learning theory, the first result going back to the
groundbreaking work \cite{Neal}, where it was proved that a central limit theorem
holds as the width of the network diverges to infinity. Much more recently,
a few authors have investigated the speed of convergence to the Gaussian
limit distribution. In this respect, some influential papers \cite{Hanin2021,RobertsHanin,Yaida} have studied the behaviour
of higher-order cumulants, covering also deep neural networks. Quantitative
central limit theorems in suitable probability metrics have been considered
very recently in \cite{BasteriTrevisan} and \cite{Bordino}. The former
authors have proved a finite-dimensional quantitative central limit theorem
for neural networks of finite depth whose activation functions satisfy a
Lipschitz condition; in \cite{Bordino}, the authors have proved second-order Poincar\'{e}
inequalities (which imply one-dimensional quantitative central limit
theorems) for neural networks with $C^{2}$ activation functions.

Understanding the Gaussian behaviour of a neural network allows, for
instance, to investigate the geometry of its landscape, e.g. the cardinality of
its minima, the number of nodal components and many other quantities of interest. However,
convergence of the finite-dimensional distributions is in general not
sufficient to constraint such landscapes. For this reason, functional
results, that is, bounds on the speed of convergence in functional spaces,
are also of great interest. So far, the literature on quantitative
functional central limit theorems is still limited: \cite{Eldan} and \cite%
{Klukowski} have focused on one-hidden-layer networks, where the random
coefficients in the inner layer are Gaussian for \cite{Eldan} and uniform on
the sphere for \cite{Klukowski}, whereas the coefficients in the outer layer
follow a Rademacher distribution for both. In particular, the authors in
\cite{Eldan} manage to establish rates of convergence in Wasserstein
distance which are (power of) logarithmic for ReLu and other activation
functions, and algebraic for polynomial or very smooth activations, see below
for more details. On the other hand, the rates in \cite{Klukowski} for ReLu
networks are of the form $O(n^{-\frac{1}{2d-1}})$; this is algebraic for
fixed values of $d,$ but it can actually converge to zero more slowly than the inverse of a logarithm if $d$
is of the same order as $n,$ as it is the case for many
applications.

\subsection{Purpose and plan of the paper}

We consider in this work functional quantitative central limit theorems
under general activations and for coefficients that are Gaussian for both
layers, which seems the most relevant case for applications; our approach is
largely based upon very recent results by \cite{BourguinCampese} on
Stein-Malliavin techniques for random elements taking values in Hilbert
spaces (we refer to \cite{NourdinPeccatiPTRF,NourdinPeccati} for the
general foundations of this approach, together with \cite{LedouxPeccati,BourguinCampeseLeonenkoTaqqu,Azmoodeh,DoblerPeccati}
for some more recent references). Our main results are collected in Section %
\ref{SectionMainTheorem}, whereas their proof with a few technical lemmas
are given in Section \ref{SectionProofMain}. A short comparison with the
existing literature is provided in Section \ref{SectionComparison}. The
Appendix \ref{Appendix} is mainly devoted to background results which we heavily exploit
throughout the paper.
\paragraph{Notation.}
Hereafter, we will write $%
a_{n}\sim b_{n}$ for two positive sequences such that $\lim_{n\rightarrow
\infty }a_{n}/b_{n}=1$.
The expression $ A \lesssim B $ means that $ A \leq C B $ for some absolute constant $C>0$.
We will denote by $ \|\cdot\| $ the $L^2$ norm corresponding to the uniform probability measure on the unit sphere $ \mathbb{S}^{d-1} $.

\subsection{Acknowledgements}

This paper has originated by a very inspiring short course taught by Boris Hanin at the University of Rome Tor Vergata in January 2023; we are very grateful to him for many deep insights and illuminating conversations. We also acknowledge financial support from MUR
Department of Excellence Programme \emph{MatModTov}, MUR Prin project \emph{Grafia}, Indam/GNAMPA and PNRR CN1 High Performance Computing, Spoke 3.

\section{Main results \label{SectionMainTheorem}}

In order to state our main theorems, we shall need some further
assumptions and notations. We shall always be concerned with activation
functions which are square integrable with respect to the standard Gaussian
measure, i.e., such that
\begin{equation*}
\mathbb{E}[\sigma ^{2}(Z)]<\infty \text{ , }Z\sim N(0,1)\text{ ;}
\end{equation*}%
this is a truly minimal conditions, which is guaranteed by $\sigma
(z)=O(\exp (z^{2}/(2+\delta ))$ for all $\delta >0.$ For such activation
functions, it is well-known that the following Hermite expansion holds, in
the $L^{2}$ sense with respect to Gaussian measure (see e.g. \cite%
{NourdinPeccati}):%
\begin{equation*}
\sigma (x)=\sum_{q=0}^{\infty }J_{q}(\sigma )\frac{H_{q}(x)}{\sqrt{q!}}\text{
, where }H_{q}(x):=(-1)^{q} e^\frac{x^{2}}{2}\frac{d^{q}}{dx^{q}} e^{-%
\frac{x^{2}}{2}}\text{ ,}
\end{equation*}%
where $\left\{ H_{q}\right\} _{q=0,1,2,...,}$is the well-known sequence
of Hermite polynomials. The coefficients $J_{q}(\sigma ),$ which will play a
crucial role in our arguments below, are defined according to the following
(normalized) projection:%
\begin{equation*}
J_{q}(\sigma ):=\frac{1}{\sqrt{q!}}\mathbb{E}[\sigma (Z)H_{q}(Z)]\text{ .}
\end{equation*}%
In the following, when no confusion is possible, we may drop the dependence of $%
J$ on $\sigma $ for ease of notation. We remark that our notation is
to some extent non-standard, insofar we have introduced the factor $\frac{1}{%
\sqrt{q!}}$ inside the projection coefficient $\mathbb{E}[\sigma
(Z)H_{q}(Z)];$ equivalently, we are defining the projection coefficients in
terms of Hermite polynomials which have been normalized to have unit
variance. Indeed, as well-known,%
\begin{equation*}
\mathbb{E}\left[ \left( \frac{H_{q}(Z)}{\sqrt{q!}}\right) ^{2}\right] =\frac{%
1}{q!}\mathbb{E}\left[ (H_{q}(Z))^{2}\right] =1\text{ .}
\end{equation*}

In short, our main results state that a quantitative functional central
limit theorem for neural networks built on $\sigma$ holds, and the rate
of convergence depends on the rate of decay of $\left\{ J_{q}(\sigma
)\right\} ,$ as $q\rightarrow \infty ;$ roughly put, it is logarithmic when
this rate is polynomial (e.g., the ReLu case), whereas convergence occurs at
algebraic rates for some activation functions which are smoother, with
exponential decay of the coefficients. A more detailed discussion of these
results and comparisons with the existing literature are given below in
Section \ref{SectionComparison}.

Let us discuss an important point about normalization. \emph{In this paper,
the measure on the sphere }$\mathbb{S}^{d-1}$\emph{\ is normalized to have
unit volume.} The bound we obtain are not invariant to this normalization,
and indeed they would be much tighter if the measure on the sphere was taken
as usual to be $s_{d}=\frac{2\pi ^{d/2}}{\Gamma (\frac{d}{2})},$ the surface
volume of $\mathbb{S}^{d-1}$. Indeed, by Stirling's formula%
\begin{equation*}
s_{d}=\frac{2\pi ^{d/2}}{\Gamma (\frac{d}{2})}\sim \frac{2\pi
^{d/2}2^{d/2}e^{d/2}}{\sqrt{\pi d}d^{d/2}}=\frac{2}{\sqrt{\pi d}}\left(
\sqrt{\frac{2e\pi }{d}}\right) ^{d}\text{ ;}
\end{equation*}%
$s_{d}$ achieves its maximum for $d=7$ ($s_{7}=33.073$) and decays faster
than exponentially as $d\rightarrow \infty $. This means that, without the
normalization that we chose, our bound on the $d_{2}$ metric would be
actually smaller by a factor of roughly $d^{-d/2}$ when the dimension grows.
On the other hand, if we were to take standard Lebesgue measure $\lambda $
then we would obtain, by a standard application of Hermite expansions and
the Diagram Formula%
\begin{equation*}
\mathbb{E}\left\Vert F\right\Vert _{L^{2}(\lambda
)}^{2}=\sum_{q}J_{q}^{2}\int_{\mathbb{S}^{d-1}}\lambda
(dx)=\sum_{q}J_{q}^{2}s_{d}\text{ ,}
\end{equation*}%
so that the $L^{2}$ norm would decay very quickly as $d$ increases, making
the interpretation of results less transparent.

Following \cite{BourguinCampese}, the convergence in our central limit theorem is measured in the $d_{2}$
metric. This is given by%
\begin{equation*}
d_{2}(F,Z)=\sup_{\left\Vert h\right\Vert _{C_{b}^{2}(L^{2}(\mathbb{S}%
^{d-1}))}\leq 1}\left\vert \mathbb{E}h(F)-\mathbb{E}h(Z)\right\vert \text{ ,
}
\end{equation*}%
where $C_{b}^{2}(L^{2}(\mathbb{S}^{d-1}))$ is the space of real-valued
applications on $L^{2}(\mathbb{S}^{d-1})$ (with respect to the uniform measure) with two bounded Frechet derivatives. It is to be
noted that the $d_{2}$ metric is bounded by the Wasserstein distance of
order 2, i.e.%
\begin{equation*}
d_{2}(F,Z)\leq \mathcal{W}_2(F,Z):=\inf_{(\widetilde{F},\widetilde{Z})}\left( \mathbb{%
E}\left\Vert \widetilde{F}-\widetilde{Z}\right\Vert _{L^{2}(\mathbb{S}%
^{d-1})}^{2}\right) ^{1/2}\text{ ,}
\end{equation*}%
where the infimum is taken over all the possible couplings of $(F,Z)$.

Our first main statement is as follows.

\begin{theorem}
\label{MainTheorem} Under the previous assumptions and notations, we have
that, for all $ Q \leq \log_3 \sqrt{n} $,%
\begin{equation}
d_{2}(F,Z)\leq C \| \sigma \| \frac{1}{\sqrt[4]{n}} \sqrt{\sum_{q=0}^{Q}J_{q}^{2}(%
\sigma )q 3^q} + \frac{3}{2} \sqrt{\sum_{q=Q+1}^{\infty }J_{q}^{2}(\sigma
)} \text{ ,}  \label{Mainformula}
\end{equation}
where $C$ is an absolute constant (in particular, independent of the
input dimension $d$),
and $ \|\sigma\| $ is the $L^2$ norm of $\sigma$ taken with respect to the Gaussian density on $\mathbb{R}$.
\end{theorem}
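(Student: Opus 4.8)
The plan is to apply the quantitative functional central limit theorem of Bourguin and Campese (recalled in the Appendix) to the random field $F$, after realizing it as a functional on an appropriate Gaussian space. First I would fix the Hilbert space $\mathbb{H} = L^2(\mathbb{S}^{d-1})$ with the unit-volume measure, and regard $F$ as an $\mathbb{H}$-valued random element built from the i.i.d.\ pairs $(V_j, W_j)$. The natural move is to truncate the Hermite expansion of $\sigma$ at level $Q$: write $\sigma = \sigma_{\le Q} + \sigma_{>Q}$ with $\sigma_{\le Q}(z) = \sum_{q=0}^{Q} J_q(\sigma) H_q(z)/\sqrt{q!}$, and correspondingly split $F = F_{\le Q} + F_{>Q}$. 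The tail term is controlled crudely: by the isometry-type computation sketched in the excerpt, $\mathbb{E}\|F_{>Q}\|^2 = \sum_{q=Q+1}^\infty J_q^2(\sigma)$ (the unit-volume normalization makes the spherical integral disappear), and this feeds into the second summand of \eqref{Mainformula} via a triangle inequality for $d_2$ against the corresponding Gaussian decomposition $Z = Z_{\le Q} + Z_{>Q}$, using that $d_2$ is dominated by $\mathcal{W}_2$ and hence by $L^2$ distances; the constant $3/2$ should come from combining the contribution of $F_{>Q}$ and $Z_{>Q}$ with a factor accounting for the bound $d_2 \le \mathcal W_2$.

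For the main term I would expand $F_{\le Q}$ into Wiener chaoses with respect to the underlying Gaussian field. Because each summand $V_j \sigma(W_j x)$ is a product of the outer Gaussian $V_j$ with a function of the inner Gaussians $W_j$, and $H_q(W_j x)/\sqrt{q!}$ lives in the $q$-th Wiener chaos of the $W_j$'s, the term $F_{\le Q}$ decomposes as a finite sum of multiple Wiener–Itô integrals of orders $1$ through $Q+1$ (the extra $+1$ from the factor $V_j$). I would then invoke the Bourguin–Campese bound, which controls $d_2(F_{\le Q}, Z_{\le Q})$ by a sum over chaos orders of contraction norms $\|f_q \otimes_r f_q\|$ (and cross terms), each weighted by combinatorial constants. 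The key structural fact to exploit is the i.i.d.\ sum structure with the $1/\sqrt n$ normalization: each $r$-contraction of the $q$-th kernel will carry a factor $n \cdot n^{-2} \cdot (\text{something})^{?}$... more precisely, the $1/\sqrt n$ in front, raised to the fourth power inside the square root of the Stein bound, produces the $n^{-1/4}$ prefactor, while the sum over chaos orders $q$ and contraction indices $r \in \{1,\dots,q\}$ contributes the $\sum_{q=0}^Q J_q^2(\sigma)\, q\, 3^q$ factor — the $3^q$ being the count of terms one gets from multinomial-type expansions of products of Hermite polynomials (the "diagram formula"), and the extra $q$ from the range of $r$. The norm $\|\sigma\|$ appears because, when bounding the contraction norms, each kernel carries an $L^2(\mathbb{R}, \gamma)$ factor of $\sigma$ from the remaining uncontracted variables, and crude bounds $|\Gamma(t)| \le \|\sigma\|^2$ on the covariance function are used. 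This is where the restriction $Q \le \log_3 \sqrt n$ enters: it guarantees $3^Q \le \sqrt n$, so that the prefactor $n^{-1/4}\sqrt{3^Q}$ stays $O(1)$ and the bound is non-trivial.

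The main obstacle, I expect, is the bookkeeping in the Wiener-chaos / diagram-formula estimate for the contraction norms: one must carefully track (i) how the product $V_j \cdot H_q(W_j x)$ sits across two independent Gaussian families, which means the chaos decomposition is over a product space and contractions can act on either factor; (ii) the precise combinatorial constant, which I anticipate bounding by something like $q!\, \binom{q}{r}^2 \lesssim q\, 3^q$ after summing over $r$ and using that $\|H_q/\sqrt{q!}\|$-type normalizations were already absorbed into $J_q$; and (iii) showing that the covariance kernel of $F$ genuinely matches that of $Z$ on the truncated part so that the Bourguin–Campese theorem applies with the correct limiting covariance — this requires checking that $\mathbb{E}[\sigma_{\le Q}(W_j x_1)\sigma_{\le Q}(W_j x_2)] = \sum_{q\le Q} J_q^2 \langle x_1,x_2\rangle^q$, which is the standard Hermite product formula on the sphere since $W_j x_1, W_j x_2$ are jointly Gaussian with correlation $\langle x_1, x_2\rangle$. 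Once these are in place, assembling \eqref{Mainformula} is a matter of a triangle inequality $d_2(F,Z) \le d_2(F_{\le Q}, Z_{\le Q}) + d_2(F_{>Q}, \text{(zero)}) + d_2(\text{(zero)}, Z_{>Q})$ and collecting the two contributions.
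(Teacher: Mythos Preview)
Your overall architecture --- truncate at level $Q$, apply Bourguin--Campese to $F_{\le Q}$, control the tail in $L^2$ --- is exactly what the paper does. The divergence is in \emph{which} Bourguin--Campese bound you invoke. You propose the contraction-norm version (their Theorem~4.3, built from $\|f_q\otimes_r f_q\|$); the paper instead uses their Theorem~3.10, where the bound reads $\tfrac12\sqrt{M(F_{\le Q})+C(F_{\le Q})}+\|S-S_{\le Q}\|_{\mathrm{HS}}$, with $M$ assembled from fourth-moment differences $\mathbb E\|F_q\|^4-\mathbb E\|Z_q\|^4$ and $C$ from $\operatorname{Cov}(\|F_p\|^2,\|F_q\|^2)$. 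The outer square root over $M+C$ is precisely the source of the $n^{-1/4}$ prefactor and of the $\sqrt{\sum_q J_q^2\, q\,3^q}$ in \eqref{Mainformula}: one proves $M(F_{\le Q})\lesssim \tfrac{\|\sigma\|^2}{\sqrt n}\sum_q J_q^2\, q\,3^q$ (this is where the hypothesis $Q\le\log_3\sqrt n$ enters) and then takes the root. The contraction route has no such outer square root and naturally delivers the $n^{-1/2}$ shape of Theorem~\ref{MainTheorem2}, not the form of \eqref{Mainformula}; your explanation of how $n^{-1/4}$ appears (``raised to the fourth power inside the square root'') does not match either version.

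The combinatorics also differ from what you sketch. The paper computes the fourth cumulant of $VH_q(Wx)$ via the diagram formula, obtaining the coefficients $\Upsilon_{q_1,q}=\binom{q}{q_1}^4(q_1!)^2(2q-2q_1)!$, and the technical core is the estimate $\max_{q_1<q}(q!)^{-2}\Upsilon_{q_1,q}=\max_{q_1}\binom{q}{q_1}^2\binom{2(q-q_1)}{q-q_1}\lesssim 3^{2q}/q$, whose maximum sits near $q_1=q/3$ by a Stirling computation; this is where the $3^q$ genuinely comes from. Your proposed bound ``$q!\binom{q}{r}^2\lesssim q\,3^q$ after summing over $r$'' is neither the relevant quantity nor correct as stated. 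Finally, the triangle inequality you write, $d_2(F,Z)\le d_2(F_{\le Q},Z_{\le Q})+d_2(F_{>Q},0)+d_2(0,Z_{>Q})$, is not valid (the metric does not split additively over summands); the paper uses $d_2(F,Z)\le d_2(F_{\le Q},Z)+\mathcal W_2(F,F_{\le Q})$ and absorbs the covariance mismatch between $F_{\le Q}$ and $Z$ into the $\|S-S_{\le Q}\|_{\mathrm{HS}}$ term of the Bourguin--Campese bound, which is how the constant $\tfrac32$ in front of the tail arises.
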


The proof is postponed to Section \ref{sec:proofthm1}.
From Theorem \ref%
{MainTheorem}, optimizing over the choice of $Q$, it is immediate to obtain much more explicit bounds.
In the case of polynomial decay of the Hermite coefficients,
the choice $ Q = \log n / (3\log 3) $ yields the following result.
\begin{corollary}
\label{MainCorollary} In the same setting as in Theorem \ref{MainTheorem},
for $ J_{q}(\sigma ) \lesssim q^{-\alpha} $, $ \alpha >\frac{1}{2} $,
we have%
\begin{equation*}
d_{2}(F,Z)\leq C \|\sigma\| \frac{1}{(\log n)^{\alpha -\frac{1}{2}}} .
\end{equation*}
\end{corollary}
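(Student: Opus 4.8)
The plan is to insert the polynomial decay hypothesis $J_q(\sigma)\lesssim q^{-\alpha}$ directly into the bound \eqref{Mainformula} of Theorem~\ref{MainTheorem} and to balance the two terms on the right-hand side by the prescribed choice of the truncation level $Q$. The point to keep in mind throughout is that the factor $3^q$ in the first (``bulk'') term is exponential in $q$, so it is only affordable to let $Q$ grow logarithmically in $n$; this is exactly what degrades the final rate from algebraic to poly-logarithmic.

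First I would estimate the tail (second) term. Since $2\alpha>1$, comparing the series with an integral gives
$\sum_{q=Q+1}^{\infty}J_q^{2}(\sigma)\lesssim\sum_{q\ge Q+1}q^{-2\alpha}\le\int_{Q}^{\infty}x^{-2\alpha}\,dx=\tfrac{1}{2\alpha-1}\,Q^{1-2\alpha}$,
so the tail contribution $\tfrac{3}{2}\sqrt{\sum_{q\ge Q+1}J_q^2(\sigma)}$ is $\lesssim Q^{\frac12-\alpha}$, with an implied constant depending only on $\alpha$ and on the proportionality constant in the hypothesis.

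Next I would bound the bulk (first) term crudely: for $0\le q\le Q$ one has $q\,3^{q}\le Q\,3^{Q}$, and by the Hermite/Parseval identity $\sum_{q=0}^{Q}J_q^{2}(\sigma)\le\sum_{q\ge0}J_q^{2}(\sigma)=\|\sigma\|^{2}$, so the first term is at most $C\|\sigma\|\,\tfrac{1}{\sqrt[4]{n}}\sqrt{Q\,3^{Q}}$. Choosing $Q=\log n/(3\log 3)$ (rounded to an integer, which is harmless) one gets $3^{Q}=n^{1/3}$, hence $\tfrac{1}{\sqrt[4]{n}}\sqrt{Q\,3^{Q}}=\sqrt{Q}\,n^{1/6-1/4}=\sqrt{Q}\,n^{-1/12}$; since $Q\asymp\log n$, the bulk term is $\lesssim\|\sigma\|\sqrt{\log n}\,n^{-1/12}$, which decays polynomially in $n$ and is therefore eventually dominated by $(\log n)^{-(\alpha-\frac12)}$. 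With the same choice, the tail term is $\lesssim Q^{\frac12-\alpha}\asymp(\log n)^{-(\alpha-\frac12)}$. Adding the two estimates and absorbing all constants into a single $C$ gives the claim.

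There is no genuine obstacle here; the steps requiring a little care are (i) checking that $Q=\log n/(3\log 3)$ is admissible in Theorem~\ref{MainTheorem}, i.e.\ $Q\le\log_{3}\sqrt n$, which holds since $\tfrac{\log n}{3\log 3}\le\tfrac{\log n}{2\log 3}=\log_{3}\sqrt n$; (ii) absorbing small values of $n$ into the constant, using that $d_{2}(F,Z)\le C\|\sigma\|$ always (take $Q=0$ in \eqref{Mainformula} and apply Parseval); and (iii) recognising that the logarithmic choice of $Q$ is forced by the exponential weight $3^{q}$ — under a faster (stretched-exponential) decay of the $J_q(\sigma)$ one could afford $Q$ of order $\log n$ or a power of it, which is the mechanism behind the algebraic rates discussed in the introduction.
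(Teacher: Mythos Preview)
Your proposal is correct and follows exactly the route the paper indicates: plug the polynomial decay into \eqref{Mainformula}, choose $Q=\log n/(3\log 3)$ so that $3^{Q}=n^{1/3}$ and the admissibility constraint $Q\le\log_{3}\sqrt{n}$ holds, then observe that the bulk term decays polynomially while the tail term gives the advertised $(\log n)^{-(\alpha-\frac12)}$. The paper itself records only the choice of $Q$ and leaves the remaining algebra implicit; your write-up fills in precisely those details. One tiny bookkeeping slip: after using $\sum_{q\le Q}J_q^2\le\|\sigma\|^2$ inside the square root, the bulk term carries $\|\sigma\|^{2}$ rather than $\|\sigma\|$, but since that term is $o(n^{-1/12+\varepsilon})$ this is immaterial for the conclusion.
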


\begin{example}[ReLu]
As shown in Lemma \ref{lem:relu}, for the ReLu activation $\sigma (t)=t%
\mathbb{I}_{[0,\infty )}(t)$ we have that $J_{q}(\sigma )\lesssim
q^{-\frac{5}{4}},$ whence we obtain the bound $d_{2}(F,Z)\lesssim (\log
n)^{-\frac{3}{2}}.$ Once again, we stress that the constant is independent
of the input dimension $d$.
\end{example}

The statement of Theorem \ref{MainTheorem} is given in order to cover the
most general activation functions, allowing for possibly
non-differentiable choices such as the ReLu. Under stronger conditions, the
result can be improved; in particular, assuming the activation function has a
Malliavin derivative with bounded fourth moment (i.e., it belongs to the
class $\mathbb{D}^{1,4},$ see \cite{NourdinPeccati,BourguinCampese}),
we obtain the following extension.

\begin{theorem}
\label{MainTheorem2} Under the previous assumptions and notations, and
assuming furthermore that $\sigma (Wx)\in \mathbb{D}^{1,4}$, we have that,
for all $Q \in \mathbb{N}$,%
\begin{equation}
d_{2}(F,Z)\leq C \frac{1}{\sqrt{n}} \sum_{q=0}^Q J_q^2(\sigma) q 3^q \left( \| \sigma \|^2 + \frac{1}{\sqrt{n}} \sum_{q=0}^Q J_q^2(\sigma) 3^q \right)
 + \frac{3}{2} \sqrt{\sum_{q=Q+1}^{\infty }J_{q}^{2}(\sigma )}\text{
,}  \label{MainFormula2}
\end{equation}
where $C$ is an absolute constant (in particular, independend of the
input dimension $d$),
and $ \|\sigma\| $ is the $L^2$ norm of $\sigma$ taken with respect to the Gaussian density on $\mathbb{R}$.
\end{theorem}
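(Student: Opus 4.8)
The plan is to combine a Wiener‑chaos truncation of the network with the quantitative central limit theorem of \cite{BourguinCampese} for Hilbert‑space‑valued functionals, used here in the stronger form available for elements of $\mathbb D^{1,4}$. Since $|x|=1$, the inner projection $W_jx=\sum_\ell W_{j\ell}x_\ell$ is a standard Gaussian independent of $V_j$, so by the product formula for multiple integrals the summand $V_jH_q(W_jx)/\sqrt{q!}$ admits no surviving contraction and belongs to the $(q+1)$‑st Wiener chaos; hence
\[
F=\sum_{q\ge 0}\frac{J_q(\sigma)}{\sqrt{q!\,n}}\sum_{j=1}^n V_jH_q(W_jx)=:\sum_{q\ge 0}F^{(q)},\qquad F^{(q)}\ \text{in the }(q+1)\text{-st chaos}.
\]
The assumption $\sigma(Wx)\in\mathbb D^{1,4}$, together with the Gaussianity of the outer weights $V_j$, guarantees $F\in\mathbb D^{1,4}$, which is exactly what makes the sharper version of the bound legitimately applicable.

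The core of the argument is to apply the $\mathbb D^{1,4}$‑form of the Bourguin--Campese inequality directly to the pair $(F,Z)$. The crucial simplification is that, by the very definition of $Z$, the two fields share the \emph{same} covariance operator on $L^2(\mathbb S^{d-1})$, so the operator‑norm mismatch term vanishes identically and one is left with a bound $d_2(F,Z)\lesssim \mathbb E\|\Psi_F\|$, where $\Psi_F$ is the Malliavin--Stein functional of $F$ (formed from its Malliavin derivative and the pseudo‑inverse Ornstein--Uhlenbeck generator). In contrast to the general chaos bound underlying Theorem \ref{MainTheorem}, this functional enters \emph{linearly} rather than beneath a square root, multiplied by a factor of the order of a Malliavin‑type norm of $F$; this is what improves the leading $n$‑dependence from $n^{-1/4}$ to $n^{-1/2}$ and produces the product structure of \eqref{MainFormula2}.

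I would then estimate $\mathbb E\|\Psi_F\|$ by expanding it along the chaos decomposition of $F$ and splitting at the threshold $Q$. The contributions involving a chaos index larger than $Q$ are controlled crudely by the $L^2(\mathbb S^{d-1})$‑size of the tail field, $\mathbb E\|\sum_{q>Q}F^{(q)}\|^2=\sum_{q>Q}J_q^2(\sigma)$ (it is here that the unit‑volume normalization of the sphere enters), giving the term $\tfrac32\sqrt{\sum_{q>Q}J_q^2(\sigma)}$. The low part reduces to the contraction norms $\|f_q\otimes_r f_q\|$ with $1\le r\le q$ and $q\le Q$. These are small because distinct neurons occupy mutually orthogonal coordinates of the underlying Gaussian vector: every cross‑term ($j\ne k$) in $f_q\otimes_r f_q$ vanishes, and the $n$ surviving diagonal terms are themselves orthogonal in the remaining tensor slots, so that $\|f_q\otimes_r f_q\|^2=\tfrac1{n^2}\sum_{j=1}^n\|(\text{diagonal term})_j\|^2=O(1/n)$ times a purely combinatorial factor; the angular part contributes at most $\int\!\!\int\langle x,x'\rangle^{2m}\le 1$ (integration against the uniform probability measure in each variable), which is where all dependence on $d$ disappears. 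Summing the resulting $O(n^{-1/2})$ estimates over $r$ and over $q\le Q$, while tracking the symmetrization and multiplication constants, produces the weights $q\,3^q$ and the prefactor $\|\sigma\|^2+\tfrac1{\sqrt n}\sum_{q\le Q}J_q^2(\sigma)3^q$ appearing in \eqref{MainFormula2}.

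The main obstacle is precisely this combinatorial bookkeeping: one must carry the constants of the Bourguin--Campese inequality --- including its fourth‑moment correction term and the discrepancy between symmetrized and raw contractions --- through the diagonal computation and verify that everything collapses to the clean form $q\,3^q$, uniformly in $q$ and with no hidden dependence on $d$. A subsidiary but necessary point is checking that $\sigma(Wx)\in\mathbb D^{1,4}$ indeed transfers to $F\in\mathbb D^{1,4}$, an integrability estimate for products of Gaussian and $\mathbb D^{1,4}$ factors. Once these are settled, the remaining ingredients --- the exact covariance matching, the tail bound, and the optimization over $Q$ --- are routine.
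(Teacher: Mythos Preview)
Your high-level strategy matches the paper's --- use the $\mathbb D^{1,4}$ version of Bourguin--Campese (their Theorem 4.3) in place of Theorem 3.10, so the contraction-norm quantity enters linearly rather than under a square root, and compute the contractions via the isonormal/product-formula representation to get the $O(1/n)$ from neuron orthogonality, the $\le 1$ bound on the spherical integral (whence the $d$-independence), and the $q\,3^q$ combinatorics. But there is a real gap in where you place the truncation. You apply the Stein--Malliavin bound to the \emph{full} $F$, so that $S=T$ and the covariance term drops, and then propose to split the functional $\Psi_F$ at $Q$, claiming the high part is controlled by $\mathbb E\|F_{>Q}\|^2=\sum_{q>Q}J_q^2$. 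That step fails: each chaos index $q$ contributes to the contraction-norm bound with the same $3^q$ weight you use in the low part, so the tail of $\widetilde M(F)+\widetilde C(F)$ is of order $n^{-1/2}\sum_{q>Q}J_q^2\,q\,3^q$, which under the sole hypothesis $\sigma(Wx)\in\mathbb D^{1,4}$ need not even be finite and is in any case not dominated by $\sqrt{\sum_{q>Q}J_q^2}$. The $L^2$ size of the tail of $F$ does not control the tail of the carr\'e-du-champ.

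The paper instead truncates at the level of $d_2$, exactly as in the proof of Theorem \ref{MainTheorem}: one writes $d_2(F,Z)\le d_2(F_{\le Q},Z)+d_2(F,F_{\le Q})$, bounds the second term by $\mathcal W_2(F,F_{\le Q})\le\sqrt{\sum_{q>Q}J_q^2}$, and applies Theorem 4.3 to $F_{\le Q}$ --- now with a nonzero covariance mismatch $\|S-S_{\le Q}\|_{\mathrm{HS}}\le\sqrt{\sum_{q>Q}J_q^2}$. These two tail pieces together produce the $\tfrac32\sqrt{\sum_{q>Q}J_q^2}$. As a smaller point, the prefactor $\|\sigma\|^2+n^{-1/2}\sum_{q\le Q}J_q^2\,3^q$ is not a separate ``Malliavin-type norm of $F$'' multiplying the bound, as you describe; it comes from the factorization of the double sum over chaos indices $(p,q)$ in the Bourguin--Campese quantity, via the same computations as in Proposition \ref{prop:M(F<=Q)} and Lemma \ref{lem:EFp4}.
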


We prove Theorem \ref{MainTheorem2} in Section \ref{sec:proofthm2}. Again, imposing specific decay profiles on the Hermite expansion we can obtain explicit bounds.
In particular, when $ J_q \lesssim e^{-\beta q} $ with $ \beta > \log \sqrt{3} $,
the second sum appearing in \eqref{MainFormula2} stays finite for all $Q$,
hence the bound assumes the form
$$
d_{2}(F,Z)\leq C \| \sigma \|^2 \frac{1}{\sqrt{n}} \sum_{q=0}^Q J_q^2(\sigma) q 3^q
 + \frac{3}{2} \sqrt{\sum_{q=Q+1}^{\infty }J_{q}^{2}(\sigma )} \ ,
$$
more in line with the bound \eqref{Mainformula}.
In such a case, letting $Q$ go to infinity leads to the next result.

\begin{corollary}
\label{MainCorollary2} In the same setting as in Theorem \ref{MainTheorem2},
for $ J_{q}(\sigma )\lesssim e^{-\beta q}$, $\beta >\log \sqrt{3}$,
we have%
\begin{equation*}
d_{2}(F,Z)\leq C \frac{1}{\sqrt{n}} \ .
\end{equation*}
\end{corollary}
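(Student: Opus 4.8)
The plan is to start from the specialized bound displayed immediately before the statement: under $J_q(\sigma)\lesssim e^{-\beta q}$ with $\beta>\log\sqrt3$ one has, for every $Q\in\mathbb N$,
\[
d_2(F,Z)\le C\,\|\sigma\|^2\,\frac1{\sqrt n}\sum_{q=0}^Q J_q^2(\sigma)\,q\,3^q+\frac32\sqrt{\sum_{q=Q+1}^\infty J_q^2(\sigma)}\,.
\]
The first thing I would do is justify this reduction from \eqref{MainFormula2}: since $J_q^2(\sigma)\lesssim e^{-2\beta q}$ and $2\beta>\log 3$, we have $3e^{-2\beta}<1$, hence $\sum_{q\ge0}J_q^2(\sigma)3^q<\infty$; therefore the term $\tfrac1{\sqrt n}\sum_{q=0}^Q J_q^2(\sigma)3^q$ inside the parenthesis of \eqref{MainFormula2} is bounded, uniformly in $Q$ and in $n\ge1$, by a finite constant, and can be absorbed into the prefactor $C\|\sigma\|^2$.

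Next I would let $Q\to\infty$ in the displayed inequality. For the first summand, the series $\sum_{q\ge0}J_q^2(\sigma)\,q\,3^q$ converges by comparison with $\sum_{q\ge0}q\,(3e^{-2\beta})^q$, which is finite precisely because $3e^{-2\beta}<1$; thus $\|\sigma\|^2\sum_{q=0}^{\infty}J_q^2(\sigma)\,q\,3^q=:C'<\infty$, a constant depending only on $\sigma$ through its $L^2$ norm and the tail-decay rate $\beta$. For the second summand, $\sum_{q\ge0}J_q^2(\sigma)=\mathbb E[\sigma^2(Z)]=\|\sigma\|^2<\infty$ by the standing square-integrability assumption, so the tail $\sum_{q=Q+1}^\infty J_q^2(\sigma)$ tends to $0$ as $Q\to\infty$. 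Since the left-hand side is independent of $Q$, passing to the limit gives $d_2(F,Z)\le C\,C'\,n^{-1/2}$, and renaming $C\,C'$ as $C$ yields the claim.

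The whole argument is a convergence check, so there is no serious obstacle; the one point that requires care is the summability of the weighted series $\sum_q J_q^2(\sigma)\,q\,3^q$, which is exactly where the hypothesis $\beta>\log\sqrt3$ enters and cannot be weakened — with merely polynomial decay of the coefficients this series diverges, one cannot send $Q\to\infty$, and one must instead optimize over $Q$ as in Corollary \ref{MainCorollary}, which only gives logarithmic rates. I would also explicitly note that, unlike in Theorem \ref{MainTheorem2}, the constant $C$ here is no longer absolute: it depends on $\sigma$ via $\|\sigma\|$ and on $\beta$. This dependence is intrinsic to the structure of the bound and is harmless, since it does not affect the $n^{-1/2}$ rate, which is the content of the statement.
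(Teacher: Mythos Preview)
Your proposal is correct and follows exactly the approach sketched in the paper: reduce \eqref{MainFormula2} to the simpler displayed bound by absorbing the uniformly bounded inner sum into the constant, then let $Q\to\infty$ and use $3e^{-2\beta}<1$ to ensure convergence of $\sum_q J_q^2(\sigma)\,q\,3^q$. Your write-up is in fact more detailed than the paper's, which merely says ``letting $Q$ go to infinity leads to the next result''; your remarks on the role of the hypothesis $\beta>\log\sqrt3$ and on the $\sigma$-dependence of the final constant are accurate and useful additions.
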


\begin{example}[polynomials/erf]
The assumptions of Corollary \ref{MainCorollary2} are fulfilled by polynomial
activations and by the error function $ \operatorname{erf}(t) = \tfrac{2}{\sqrt{\pi}} \int_0^t e^{-s^2} ds $, for which $ J_q^2(\sigma) \lesssim (2/3)^{q} $ -- cfr.~\cite{Klukowski}. In these
cases, the fact that $\sigma (Wx)\in \mathbb{D}^{1,4}$ can be readily shown
by means of the triangle inequality and the standard hypercontractivity
bound for Wiener chaos components -- see \cite[Corollary
2.8.14]{NourdinPeccati}.
\end{example}

\begin{example}[tanh/logistic]
Of course, other forms of decay could be considered. For instance, for the
hyperbolic tangent $\sigma (t)=(e^{t}-e^{-t})/(e^{t}+e^{-t})$ the rate of
decay of the Hermite coefficients is of order $\exp (-C\sqrt{q})$ (see e.g.
\cite{Eldan}), hence the result of Corollary \ref{MainCorollary2} does not
apply; the bounds in Corollary \ref{MainCorollary} obviously hold, but
applying directly Theorem \ref{MainTheorem} and some algebra we obtain the finer bound%
\begin{equation*}
d_{2}(F,Z)\lesssim  \exp(-c\sqrt{\log n})%
\qquad \text{for }J_{q}(\sigma )\leq \exp (-C\sqrt{q})\text{ }.
\end{equation*}%
The same bound holds also for the sigmoid/logistic activation function $%
\sigma (t)=(1+e^{-t})^{-1}$.
\end{example}

\subsection{Discussion} \label{sec:discussion}

The ideas in our proof are quite standard in the literature on Quantitative
Central Limit Theorems, and can be extended directly to the functional
setting that we consider here. As a first step, we partition our neural
network into two processes, one corresponding to its projection onto the
first $Q$ Wiener chaoses, for $Q$ an integer to be chosen below, and the
other corresponding to the remainder. This remainder can be easily bounded
in Wasserstein distance by standard $L^{2}$ arguments; for the leading term,
following recent results by \cite{BourguinCampese}, we need a careful
analysis of fourth moments and covariances for the $L^{2}$ norms of the
Wiener projections. In particular, these bounds can be expressed in terms of
multiple integrals of fourth-order cumulants, as by now standard in the
literature on the so-called Stein-Malliavin method (see \cite{NourdinPeccati,BourguinCampese} and the references therein). The computation of these
terms is technical, but the results are rather explicit; they are collected in
dedicated propositions and lemmas.

One technical point that we shall address is the following. The convergence
results by \cite{BourguinCampese} require the limiting process to be
nondegenerate; this condition is not always satisfied for arbitrary
activation functions if one takes the corresponding Hilbert space to be $%
L^{2}(\mathbb{S}^{d-1})$ (counter-examples being finite-order polynomials).
However, we note that for activations for which the corresponding networks
are dense in the space of continuous functions (such as the ReLu or the
sigmoid and basically all non-polynomials, see for instance the classical
universal approximation theorems in \cite{Cybenko,Hornik,Hornik2,Leshno,Pinkus}), then the nondegeneracy condition
is automatically satisfied. On the other hand, when the condition fails our
results continue to hold, but the underlying functional space must be taken
to be the reproducing kernel Hilbert space generated by the covariance
operator, which is strictly included into $L^{2}(\mathbb{S}^{d-1})$ when
universal approximation fails (e.g., in the polynomial case).

\section{A comparison with the existing literature \label{SectionComparison}}

Two papers that have established quantitative functional central limit theorems for neural networks
are those by \cite{Eldan} and \cite{Klukowski}. Their settings and results
are not entirely comparable to ours; on the one hand, they use the Wasserstein
distance, which is slightly stronger that the $d_{2}$ metric we
consider here.
On the other hand, their model for the random weight is different from ours:
for the outer layers, both consider Rademacher variables,
while for the inner layer the distribution is Gaussian in \cite{Eldan}
and uniform on the sphere in \cite{Klukowski};
on the contrary, we assume Gaussian distribution for both inner
and outer layer.
As a further (minor) difference, we note that in \cite%
{Eldan}, as well as in our paper, input variables are in $\mathbb{S}^{d-1}$,
while
\cite{Klukowski} considers $\sqrt{d}\mathbb{S}^{d-1}$;
this is just a notational issue, though, because in \cite%
{Klukowski} the argument of the activation function is normalized by a
factor $1/\sqrt{d}$.

Even with these important caveats, it is nevertheless of some interest to
compare their bounds  with ours, for activation functions for which
there is an overlap. We report their results together with ours in Table %
\ref{savare} (the constant $C$ may differ from one box to the other, but in
all cases it does not depend neither on $d$ nor on $n$).

\begin{table}[h]
\centering
\ra{2}
\resizebox{\columnwidth}{!}{%
\begin{tabular}{llll}
 & Eldan et al. \cite{Eldan} & Klukowski \cite{Klukowski} & This paper \\ \hline
$J_{q}\sim q^{-\alpha }$ & $(\frac{\log n}{\log \log n\log d})^{-\alpha +%
\frac{1}{2}}$ & - & $(\log n)^{-\alpha +\frac{1}{2}}$\\ \hline
ReLu & $(\frac{\log n}{\log \log n\log d})^{-\frac{3}{4}}$ & $n^{-\frac{3}{%
4d-2}}$ & $(\log n)^{-\frac{3}{4}}$ \\ \hline
$\tanh$ / logistic & $\exp(-c\sqrt{\frac{\log n}{\log d\log \log n}})$ & -
& $\exp(-c\sqrt{\log n})$ \\ \hline
$ J_{q} \sim e^{-\beta q} $ & $ n^{-c (\log\log n \log d)^{-1}} $ & - & $n^{-\frac{1}{2}}$ \\ \hline
$\operatorname{erf}$ & $ n^{-c (\log\log n \log d)^{-1}} $ & $ C^d (\log n)^{\frac{d}{2}-1} n^{-\frac{1}{2}} $  & $n^{-\frac{1}{2}}$ \\ \hline
polynomial order $p$ & $p^{c p}d^{\frac{5p}{6}-\frac{1}{12}}n^{-\frac{1}{6}}$ & $%
(d+p)^{\frac{d}{2}}  n^{-\frac{1}{2}}$ & $n^{-\frac{1}{2}}$ \\ \hline
\end{tabular}%
}
\caption{Comparison of convergence rates established by different functional quantitative central limit theorems for several activation functions. Bear in mind that two different metrics $ d_2 \leq \mathcal{W}_2 $ are considered, $ \mathcal{W}_2 $ for \cite{Eldan,Klukowski}, and $d_2$ for this paper.
The parameters $\alpha$ and $\beta$ must satisfy $ \alpha > 1/2 $ and $ \beta > \log \sqrt{3} $.
}
\label{savare}
\end{table}

Comparing to \cite{Eldan}, our bounds remove
a logarithmic factor in the input dimension and a $\log \log $ factor in
the number of neurons for ReLu and $\tanh$ networks;
for smooth activations, the rate goes from $n^{-1/6}$ to $%
n^{-1/2}$,  and the constants lose the polynomial dependence on the dimension.
The rate in \cite{Klukowski} in the polynomial case is $ n^{-1/2} $ as ours,
but with a factor growing in the input dimension $d$ as $d^{d/2}$.
In the ReLu setting, \cite{Klukowski} displays the algebraic rate $n^{-\frac{3}{4d-2}}$,
which \emph{for fixed values of }$d$
decays faster than our logarithmic bound.
However,
interpreting these bounds from a ``fixed $d,$ growing $n$'' perspective can be
incomplete: when considering distances in
probability metrics it is of interest to allow both $d$ and $n$ to vary. In
particular, for neural networks applications, it is often the case
that the input dimension and number of neurons are of comparable order; taking for
instance $d=d_{n}\sim n^{\alpha }$, it is immediate to verify that for all $%
\alpha >0$ (no matter how small) one has%
\begin{equation*}
\lim_{n\rightarrow \infty }\frac{(\log n)^{-\frac{3}{4}}}{n^{-\frac{3}{4d-2}}%
}=\lim_{n\rightarrow \infty }\frac{(\log n)^{-\frac{3}{4}}}{\exp (-\frac{3}{%
4n^{\alpha }-2}\log n)}=0\text{ ,}
\end{equation*}%
so that our bound in the $d_{2}$ metric decays faster that the one by \cite%
{Klukowski} in $\mathcal{W}_2$ under these circumstances.

\section{Proof of the main results \label{SectionProofMain}}

Our main results, Theorems \ref{MainTheorem} and \ref{MainTheorem2}, are proved in Sections \ref{sec:proofthm1} and \ref{sec:proofthm2}, respectively.
The proofs use auxiliary propositions and lemmas,
which are established in Sections \ref{sec:M(F)} and \ref{sec:C(F)}.

\subsection{Proof of Theorem \protect\ref{MainTheorem}} \label{sec:proofthm1}

The main idea behind our proof is as follows. For some integer $Q$ to be
fixed later, write
\begin{equation*}
F=F_{\leq Q}+F_{>Q}\text{ , \ }
\end{equation*}%
where
$$
 F_{\leq Q}:=\sum_{q=0}^{Q}F_{q} \ , \qquad %
F_{>Q}:=\sum_{q=Q+1}^{\infty }F_{q}\text{ ,}
$$
and
\begin{equation*}
F_{q}(x):=\frac{J_{q}(\sigma )}{\sqrt{n}}\sum_{j=1}^{n}V_{j}\frac{%
H_{q}(W_{j}x)}{\sqrt{q!}}\text{ , }x\in \mathbb{S}^{d-1}.\text{ }
\end{equation*}%
In words, as anticipated in the Section \ref{sec:discussion}, we are partitioning our
network into a component projected onto the $Q$ lowest Wiener chaoses and
the remainder projection on the highest chaoses. Now let us denote by $Z$
a zero mean Gaussian process with covariance function%
\begin{equation*}
\mathbb{E}\left[ Z(x_{1})Z(x_{2})\right] :=\sum_{q=0}^{\infty
}J_{q}^{2}\left\langle x_{1},x_{2}\right\rangle ^{q}.
\end{equation*}%
Likewise, in the sequel we shall write $\left\{ Z_{q}\right\} _{q\in\mathbb{N}}$
for a sequence of independent zero mean Gaussian with covariance function $%
\mathbb{E}\left[ Z_{q}(x_{1})Z_{q}(x_{2})\right] := J_{q}^{2}\left\langle
x_{1},x_{2}\right\rangle ^{q}.$ \ Our idea is to use Theorem 3.10 in \cite%
{BourguinCampese} and hence to consider%
\begin{eqnarray*}
d_{2}(F,Z) &\leq &d_{2}(F_{\leq Q},Z)+d_{2}(F,F_{\leq Q}) \\
&\leq &\frac{1}{2}\left( \sqrt{M(F_{\leq Q})+C(F_{\leq Q})}+\left\Vert
S-S_{\leq Q}\right\Vert _{L^{2}(\Omega ,\rm{HS})}\right) +\mathcal{W}_2(F,F_{\leq Q})%
\text{ ,}
\end{eqnarray*}%
where%
\begin{align*}
M(F_{\leq Q}) & := \frac{1}{\sqrt{3}}\sum_{p,q}^{Q}c_{p,q}\sqrt{\mathbb{E}%
\left\Vert F_{p}\right\Vert ^{4}(\mathbb{E}\left\Vert F_{q}\right\Vert ^{4}-%
\mathbb{E}\left\Vert Z_{q}\right\Vert ^{4})} \text{ ,} \\
C(F_{\leq Q}) & := \sum_{\substack{ p,q  \\ p\neq q}}^{Q}c_{p,q}\operatorname{Cov}(\left\Vert
F_{p}\right\Vert ^{2},\left\Vert F_{q}\right\Vert ^{2})\text{ ,} \\
c_{p,q} & := \begin{cases}
1+\sqrt{3} & p=q \\
\frac{p+q}{2p} & p\neq q \ .
\end{cases}
\end{align*}

Now, we have
\begin{equation*}
\mathcal{W}_2(F,F_{\leq Q})\leq \sqrt{\sum_{q=Q+1}^{\infty }J_{q}^{2}} .
\end{equation*}%
Moreover,
\begin{equation*}
\left\Vert S-S_{\leq Q}\right\Vert _{L^{2}(\Omega ,\rm{HS})}^{2}\leq
\sum_{q=Q+1}^{\infty }J_{q}^{2} \ .
\end{equation*}%
Indeed, first note that the covariance operator can be written explicitly in coordinates as%
\begin{align*}
S_{\leq Q}(x_{1},x_{2}) &= \frac{1}{n}\sum_{p,q}^{Q}J_{p}(\sigma )J_{q}(\sigma
)\sum_{j_{1},j_{2}=1}^{n}\mathbb{E}\left[ \left\{
V_{j_{1}}H_{p}(W_{j_{1}}x_{1})\right\} \left\{
V_{j_{2}}H_{q}(W_{j_{2}}x_{2})\right\} \right] \\
&= \sum_{q}^{Q}J_{q}^{2}(\sigma )\left\langle x_{1},x_{2}\right\rangle ^{q} \ ,
\end{align*}%
and hence
\begin{equation*}
S(x,y)-S_{\leq Q}(x,y)=\sum_{q=Q+1}^{\infty }J_{q}^{2}\left\langle
x,y\right\rangle ^{q} \ .
\end{equation*}%
Therefore, taking the standard basis of spherical harmonics $\left\{ Y_{\ell
m}\right\}$, which are eigenfunctions of the covariance operators (see \cite%
{MaPeCUP}),%
\begin{eqnarray*}
&&\left\Vert S-S_{\leq Q}\right\Vert _{L^{2}(\Omega ,\rm{HS})}^{2} \\
&=&\sum_{\ell ,\ell ^{\prime },m,m^{\prime }}\sum_{q=Q+1}^{\infty }\int_{%
\mathbb{S}^{d-1}\times \mathbb{S}^{d-1}}Y_{\ell m}(x)Y_{\ell ^{\prime
}m^{\prime }}(y)\sum_{\ell ^{\prime \prime }m^{\prime \prime }}C_{\ell
^{\prime \prime }}(q)Y_{\ell ^{\prime \prime }m^{\prime \prime }}(x)Y_{\ell
^{\prime \prime }m^{\prime \prime }}(y)dxdy \\
&=&\sum_{\ell ,\ell ^{\prime },m,m^{\prime }}\sum_{q=Q+1}^{\infty
}\sum_{\ell ^{\prime \prime }m^{\prime \prime }}C_{\ell ^{\prime \prime
}}(q)\int_{\mathbb{S}^{d-1}\times \mathbb{S}^{d-1}}Y_{\ell m}(x)Y_{\ell
^{\prime }m^{\prime }}(y)Y_{\ell ^{\prime \prime }m^{\prime \prime
}}(x)Y_{\ell ^{\prime \prime }m^{\prime \prime }}(y)dxdy \\
&=&\sum_{\ell ,\ell ^{\prime },m,m^{\prime }}\sum_{q=Q+1}^{\infty
}\sum_{\ell ^{\prime \prime }m^{\prime \prime }}C_{\ell ^{\prime \prime
}}(q)\delta _{\ell }^{\ell ^{\prime \prime }}\delta _{\ell ^{\prime }}^{\ell
^{\prime \prime }}\delta _{m}^{m^{\prime \prime }}\delta _{m^{\prime
}}^{m^{\prime \prime }} \\
&=&\sum_{\ell }\sum_{q=Q+1}^{\infty }C_{\ell }(q)n_{\ell
;d} \\
&=&\sum_{q=Q+1}^{\infty }J_{q}^{2}\text{ },
\end{eqnarray*}%
where $n_{\ell ;d}$ is the dimension of the $\ell $-th eigenspace in
dimension $d$ and $\left\{ C_{\ell }(q)\right\} $ is the angular power
spectrum of $F_{q},$ see again \cite{MaPeCUP} for more discussion and
details (the discussion in this reference is restricted to $d=2,$ but the
results can be extended to any dimension).

We are left to bound $M(F_{\leq Q})$ and $C(F_{\leq Q})$.
In Section \ref{sec:M(F)} we will provide a bound for $M(F_{\leq Q})$.
Under the conditon $ Q \leq \log_3 \sqrt{n} $, such bound reduces to
\begin{equation*}
M(F_{\leq Q})
\lesssim \frac{\| \sigma \|^2}{\sqrt{n}} \sum_q^{Q} J_q^2 q 3^q \ .
\end{equation*}%
On the other hand, in Section \ref{sec:C(F)} we will show that
$$
C(F_{\leq Q}) \le M(F_{\leq Q}) \ .
$$
This completes the proof.

\subsection{Bounding $M(F_{\leq Q})$} \label{sec:M(F)}

The following proposition provides a bound on $M(F_{\leq Q})$.
The proof relies on several technical lemmas, which are given below.

\begin{proposition} \label{prop:M(F<=Q)}
We have
\begin{equation*}
M(F_{\leq Q})
\lesssim \frac{1}{\sqrt{n}} \sum_{q=0}^Q J_q^2 q 3^q \left( \| \sigma \|^2 + \frac{1}{\sqrt{n}} \sum_{q=0}^Q J_q^2 3^q \right) \ .
\end{equation*}
\end{proposition}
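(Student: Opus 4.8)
The plan is to bound $M(F_{\le Q})$ by reducing it to a single estimate on the difference of fourth moments $\mathbb{E}\|F_q\|^4-\mathbb{E}\|Z_q\|^4$, which I will show is of order $9^q/n$, and then to reassemble the double sum defining $M(F_{\le Q})$ by elementary manipulations of the weights $c_{p,q}$. The only property of the sphere I will use is that the uniform measure has total mass one, so every constant produced stays independent of $d$.

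First I would compute the fourth moments of the $L^2$ norms. Since $\|F_q\|^2=\int_{\mathbb{S}^{d-1}}F_q(x)^2\,dx$, one has $\mathbb{E}\|F_q\|^4=\int\int\mathbb{E}[F_q(x)^2F_q(y)^2]\,dx\,dy$, and likewise for $Z_q$. Expanding $F_q(x)^2F_q(y)^2$ into the quadruple sum over neuron indices and using that $(V_1,\dots,V_n)$ is i.i.d.\ $N(0,1)$ and independent of $(W_1,\dots,W_n)$, only the index patterns in which the $V$'s pair up contribute; the surviving $W$-expectations are evaluated with the two elementary facts $\mathbb{E}[h_q(W_jx)^2]=1$ and $\mathbb{E}[h_q(W_jx)h_q(W_jy)]=\langle x,y\rangle^q$ (orthogonality of Hermite polynomials for the bivariate Gaussian $(W_jx,W_jy)$, which has unit variances and correlation $\langle x,y\rangle$; here $h_q=H_q/\sqrt{q!}$). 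This short diagram-type computation gives, with $\rho:=\langle x,y\rangle$,
\[
\mathbb{E}[F_q(x)^2F_q(y)^2]=\frac{J_q^4}{n^2}\Big(3n\,A_q(\rho)+n(n-1)(1+2\rho^{2q})\Big),\qquad A_q(\rho):=\mathbb{E}\big[h_q(\xi)^2h_q(\eta)^2\big],
\]
with $(\xi,\eta)$ bivariate Gaussian of unit variances and correlation $\rho$, while Isserlis' formula yields $\mathbb{E}[Z_q(x)^2Z_q(y)^2]=J_q^4(1+2\rho^{2q})$. The $O(n^2)$ terms then cancel in the difference, leaving
\[
\mathbb{E}[F_q(x)^2F_q(y)^2]-\mathbb{E}[Z_q(x)^2Z_q(y)^2]=\frac{J_q^4}{n}\big(3A_q(\rho)-1-2\rho^{2q}\big).
\]

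Next I would establish the key estimate. Writing the Hermite expansion $h_q(u)^2=1+\sum_{a=1}^q c_a h_{2a}(u)$ shows $A_q(\rho)=1+\sum_{a=1}^q c_a^2\rho^{2a}\ge 1$, so (since also $\rho^{2q}\le 1$) the bracket $3A_q(\rho)-1-2\rho^{2q}$ is nonnegative---hence the square roots in $M(F_{\le Q})$ are well defined---and bounded above by $3A_q(\rho)$. By Cauchy--Schwarz and hypercontractivity of the $q$-th Wiener chaos, $A_q(\rho)\le\mathbb{E}[h_q(Z)^4]\le 9^q$. Integrating over $\mathbb{S}^{d-1}\times\mathbb{S}^{d-1}$, where $\int\int dx\,dy=1$, this gives $\mathbb{E}\|F_q\|^4-\mathbb{E}\|Z_q\|^4\le 3J_q^4\,9^q/n$, hence $\sqrt{\mathbb{E}\|F_q\|^4-\mathbb{E}\|Z_q\|^4}\lesssim J_q^2\,3^q/\sqrt n$; similarly $\mathbb{E}\|Z_q\|^4\le 3J_q^4$, so that $\sqrt{\mathbb{E}\|F_q\|^4}\le\sqrt{\mathbb{E}\|Z_q\|^4}+\sqrt{\mathbb{E}\|F_q\|^4-\mathbb{E}\|Z_q\|^4}\lesssim J_q^2(1+3^q/\sqrt n)$.

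Finally I would assemble the double sum. Substituting the two previous bounds into the definition of $M(F_{\le Q})$ yields
\[
M(F_{\le Q})\lesssim \frac{1}{\sqrt n}\sum_{p,q}c_{p,q}\,J_p^2J_q^2\,3^q\Big(1+\frac{3^p}{\sqrt n}\Big).
\]
It then remains to absorb the weights: since $c_{p,q}=1+\sqrt3$ for $p=q$ and $c_{p,q}=\tfrac{p+q}{2p}\le\tfrac12+\tfrac q{2p}$ for $p\ne q$, and since $\sum_p J_p^2/p\le\sum_p J_p^2=\|\sigma\|^2$ (and the same with $J_p^2$ replaced by $J_p^2 3^p$), performing the inner sum over $p$ produces a factor $\lesssim q\,\|\sigma\|^2$ in the first part and $\lesssim q\sum_p J_p^2 3^p$ in the second; carrying out the remaining sum over $q$ then gives exactly $\big(\sum_q J_q^2 q\,3^q\big)\big(\|\sigma\|^2+\tfrac1{\sqrt n}\sum_q J_q^2 3^q\big)/\sqrt n$, which is the claimed bound. (The $q=0$ chaos contributes nothing, since $F_0$ is a Gaussian variable with the same law as $Z_0$, so all sums may be taken over $p,q\ge 1$.) I expect the main obstacle to be the moment computation of the first step: the bookkeeping of which of the $O(n^4)$ terms survive and the precise cancellation responsible for the gain $1/n$ (rather than $O(1)$) in $\mathbb{E}\|F_q\|^4-\mathbb{E}\|Z_q\|^4$; once that identity is in hand, the positivity and boundedness of $A_q$, the hypercontractivity bound, and the manipulation of the $c_{p,q}$ are all routine.
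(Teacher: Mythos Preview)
Your argument is correct and arrives at the same bound as the paper, but by a genuinely shorter route. The paper writes $\mathbb{E}\|F_q\|^4-\mathbb{E}\|Z_q\|^4$ as an integrated fourth cumulant, expands it via the diagram formula into a sum over combinatorial coefficients $\Upsilon_{q_1,q}=\binom{q}{q_1}^4(q_1!)^2(2q-2q_1)!$, and then devotes a separate lemma (Stirling's formula plus a maximization over $q_1\in[0,q-1]$) to proving $\Upsilon_{q_1,q}\lesssim (q!)^2\,9^q/q$; summing the $q$ terms recovers $J_q^4\,9^q/n$. You bypass all of this: your identity $\mathbb{E}\|F_q\|^4-\mathbb{E}\|Z_q\|^4=\tfrac{J_q^4}{n}\int\!\!\int(3A_q(\rho)-1-2\rho^{2q})\,dx\,dy$ together with Cauchy--Schwarz and the hypercontractivity bound $\mathbb{E}[h_q(Z)^4]\le 9^q$ yields the same estimate in one line. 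The paper's combinatorics do give a slightly finer per-diagram bound (an extra $1/q$), but this is not used downstream. In the assembly step the paper simply uses $c_{p,q}\lesssim q$ to factor the double sum, while you split $c_{p,q}=\tfrac12+\tfrac{q}{2p}$ and invoke $\sum_p J_p^2/p\le\|\sigma\|^2$; both lead to the stated bound. One small caveat: your claim that ``all sums may be taken over $p,q\ge 1$'' is justified on the $q$ side (indeed $F_0\stackrel{d}{=}Z_0$, so that factor vanishes) but not on the $p$ side, where $p=0$ still contributes to $\sqrt{\mathbb{E}\|F_p\|^4}$; this is harmless since $\sqrt{\mathbb{E}\|F_0\|^4}=\sqrt{3}\,J_0^2$ fits your bound $J_p^2(1+3^p/\sqrt n)$, and because $F_0$ actually lies in the \emph{first} Wiener chaos the relevant $c$-coefficient is finite and $\lesssim q$.
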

\begin{proof}
We have
\begin{align*}
M(F_{\leq Q}) &= \frac{1}{\sqrt{3}}\sum_{p,q}^{Q}c_{p,q}\sqrt{\mathbb{E}%
\left\Vert F_{p}\right\Vert ^{4}(\mathbb{E}\left\Vert F_{q}\right\Vert ^{4}-%
\mathbb{E}\left\Vert Z_{q}\right\Vert ^{4})} \\
&\le \sum_p^{Q} \sqrt{\mathbb{E}\left\Vert F_{p}\right\Vert ^{4}}
\sum_q^Q q \sqrt{ \mathbb{E}\left\Vert F_{q}\right\Vert ^{4}-%
\mathbb{E}\left\Vert Z_{q}\right\Vert ^{4} } .
\end{align*}
In Lemma \ref{M(F)} we compute
\begin{align*}
 \mathbb{E}\left\Vert F_{q}\right\Vert ^{4}-
\mathbb{E}\left\Vert Z_{q}\right\Vert ^{4}
= \frac{1}{n}\frac{J_{q}^{4}}{(q!)^{2}} \sum_{q_{1}=0}^{q-1}\Upsilon _{q_{1},q}\int_{\mathbb{S}^{d-1}\times
\mathbb{S}^{d-1}}\left\langle x_{1},x_{2}\right\rangle
^{2(q-q_{1})}dx_{1}dx_{2} \ ,
\end{align*}
with
\begin{align*}
\Upsilon _{q_{1},q} = \binom{q}{q_{1}}^{4}(q_{1}!)^{2}(2q-2q_{1})! \ .
\end{align*}
By Lemma \ref{BoundCountingLemma} we get the bound
\begin{equation*}
\max_{0\leq q_{1}\leq q-1}\Upsilon _{q_{1},q}\lesssim \frac{(q!)^{2} 3^{2q}}{q} \ ,
\end{equation*}
whereas Lemma \ref{BetaLemma} yields
\begin{align*}
\int_{\mathbb{S}^{d-1}\times
\mathbb{S}^{d-1}}\left\langle x_{1},x_{2}\right\rangle
^{2(q-q_{1})}dx_{1}dx_{2}
\leq 1 .
\end{align*}
Therefore,
$$
\mathbb{E}\left\Vert F_{q}\right\Vert ^{4}-
\mathbb{E}\left\Vert Z_{q}\right\Vert ^{4}
\lesssim \frac{J_{q}^{4} 3^{2q}}{n} \ .
$$
Moreover, in view of Lemma \ref{lem:EFp4}, we have
\begin{equation*}
\mathbb{E}\left\Vert F_{p}\right\Vert ^{4}
\lesssim \frac{J_{p}^{4} 3^{2p}}{n} + J_{p}^{4}
\text{\ .}
\end{equation*}%
Collecting all the terms, we finally obtain the claim
\end{proof}

In the following, we collect the technical lemmas used in the proof of Proposition \ref{prop:M(F<=Q)}.

\begin{lemma}
\label{M(F)} We have
\begin{align*}
 \mathbb{E}\left\Vert F_{q}\right\Vert ^{4}-
\mathbb{E}\left\Vert Z_{q}\right\Vert ^{4}
= \frac{1}{n}\frac{J_{q}^{4}}{(q!)^{2}} \sum_{q_{1}=0}^{q-1}\Upsilon _{q_{1},q}\int_{\mathbb{S}^{d-1}\times
\mathbb{S}^{d-1}}\left\langle x_{1},x_{2}\right\rangle
^{2(q-q_{1})}dx_{1}dx_{2}
\end{align*}
with $\Upsilon _{q_{1},q} = \binom{q}{q_{1}}^{4}(q_{1}!)^{2}(2q-2q_{1})! $.
\end{lemma}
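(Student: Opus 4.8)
The plan is a direct fourth-moment computation: expand both $\|F_q\|^4$ and $\|Z_q\|^4$ into integrals over $\mathbb{S}^{d-1}\times\mathbb{S}^{d-1}$ and exploit the independence of the inner and outer weights. First I would write $\|F_q\|^4=\int_{\mathbb{S}^{d-1}}\int_{\mathbb{S}^{d-1}}F_q(x)^2F_q(y)^2\,dx\,dy$ and plug in $F_q(x)=\frac{J_q}{\sqrt{nq!}}\sum_{j=1}^nV_jH_q(W_jx)$, which turns it into a sum over four neuron indices $j_1,j_2,j_3,j_4$ of $V_{j_1}V_{j_2}V_{j_3}V_{j_4}$ multiplied by $H_q(W_{j_1}x)H_q(W_{j_2}x)H_q(W_{j_3}y)H_q(W_{j_4}y)$. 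Taking the expectation, the outer weights are independent of the inner ones, so the $V$-moment factors out; by Isserlis' formula $\mathbb{E}[V_{j_1}V_{j_2}V_{j_3}V_{j_4}]=\delta_{j_1j_2}\delta_{j_3j_4}+\delta_{j_1j_3}\delta_{j_2j_4}+\delta_{j_1j_4}\delta_{j_2j_3}$, so the quadruple sum collapses to three double sums over index pairs $(j,k)$, and I would split each one into the off-diagonal part $j\neq k$ and the diagonal part $j=k$.

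The inner expectations are handled by two classical facts. Since $x,y\in\mathbb{S}^{d-1}$, the pair $(W_jx,W_jy)$ is a standard bivariate Gaussian with correlation $\langle x,y\rangle$, so $\mathbb{E}[H_a(W_jx)H_b(W_jy)]=\delta_{ab}\,a!\,\langle x,y\rangle^{a}$; for $j\neq k$ the rows $W_j$ and $W_k$ are independent, hence those terms factor into products of such quantities and yield factors $q!$ (for the pairing $\delta_{j_1j_2}\delta_{j_3j_4}$) or $q!\langle x,y\rangle^{q}$ (for the other two pairings). For the diagonal part I would use the Hermite product formula $H_q(t)^2=\sum_{q_1=0}^q\binom{q}{q_1}^2q_1!\,H_{2q-2q_1}(t)$: applying it to both $H_q(W_jx)^2$ and $H_q(W_jy)^2$ and then the previous identity term by term gives $\mathbb{E}[H_q(W_jx)^2H_q(W_jy)^2]=\sum_{q_1=0}^q\Upsilon_{q_1,q}\langle x,y\rangle^{2(q-q_1)}$ with $\Upsilon_{q_1,q}=\binom{q}{q_1}^4(q_1!)^2(2q-2q_1)!$. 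Integrating over the sphere by Fubini replaces $\langle x,y\rangle^{2(q-q_1)}$ by $\int_{\mathbb{S}^{d-1}\times\mathbb{S}^{d-1}}\langle x_1,x_2\rangle^{2(q-q_1)}dx_1dx_2$.

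Then I would collect terms. The off-diagonal contributions carry the combinatorial weight $n(n-1)$, which after multiplication by the prefactor $\frac{J_q^4}{n^2(q!)^2}$ reproduces $J_q^4\bigl(1+2\int\!\!\int\langle x_1,x_2\rangle^{2q}dx_1dx_2\bigr)$ up to an $O(1/n)$ correction, while the diagonal contributions carry weight $n$ and give the $\frac1n$-order expression built from the $\Upsilon_{q_1,q}$. On the other side, $\mathbb{E}\|Z_q\|^4=\int\!\!\int\mathbb{E}[Z_q(x)^2Z_q(y)^2]\,dx\,dy$ follows immediately from the Gaussian identity $\mathbb{E}[Z_q(x)^2Z_q(y)^2]=\mathbb{E}[Z_q(x)^2]\mathbb{E}[Z_q(y)^2]+2\mathbb{E}[Z_q(x)Z_q(y)]^2$ together with $\mathbb{E}[Z_q(x)Z_q(y)]=J_q^2\langle x,y\rangle^q$, giving exactly $J_q^4\bigl(1+2\int\!\!\int\langle x_1,x_2\rangle^{2q}dx_1dx_2\bigr)$. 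Subtracting, the $n$-independent parts cancel — this cancellation is the reason $Z_q$ is taken with the same covariance as $F_q$ — and what remains is $\frac1n$ times the diagonal contribution; separating off the $q_1=q$ term of $\sum_{q_1}\Upsilon_{q_1,q}\langle x,y\rangle^{2(q-q_1)}$ (the one proportional to $\langle x,y\rangle^0$, which is exactly the part that gets cancelled) leaves the sum over $q_1=0,\dots,q-1$, and a final rearrangement of the surviving coefficients gives the stated identity.

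I expect the main obstacle to be bookkeeping rather than conceptual: correctly tracking the three Wick pairings, the $n(n-1)$ versus $n$ multiplicities, and verifying that every term that is $O(1)$ in $n$ is matched against $\mathbb{E}\|Z_q\|^4$ so that only the $\frac1n$ contribution survives. The single genuinely non-routine ingredient is the Hermite product formula for $H_q(t)^2$ and the resulting evaluation of $\mathbb{E}[H_q(U)^2H_q(V)^2]$ for a correlated standard bivariate Gaussian, since this is the step that produces the coefficients $\Upsilon_{q_1,q}$.
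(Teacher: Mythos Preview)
Your plan is essentially the same computation the paper carries out, just organized differently: the paper first rewrites $\mathbb{E}\|F_q\|^4-\mathbb{E}\|Z_q\|^4$ as $\tfrac{1}{n}\tfrac{J_q^4}{(q!)^2}\int\!\!\int\operatorname{Cum}\bigl(VH_q(Wx_1),VH_q(Wx_1),VH_q(Wx_2),VH_q(Wx_2)\bigr)$ and then evaluates that cumulant via the diagram formula for Hermite polynomials, whereas you factor out the $V$-moments by Isserlis first and then use the linearization identity for $H_q^2$. Both routes compute the same integrand.

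There is, however, a real slip in your final cancellation. All three Isserlis pairings collapse to the \emph{same} expression on the diagonal $j=k$ (since all four indices coincide there), so the diagonal piece carries a prefactor $3$ in front of $\mathbb{E}[H_q(Wx)^2H_q(Wy)^2]$, not $1$. Hence the $q_1=q$ term of your Hermite expansion contributes $3(q!)^2$, which does \emph{not} cancel the $-(q!)^2\bigl(1+2I_{2q}\bigr)$ arising from the $O(1/n)$ deficit of the off-diagonal part (writing $I_m:=\int_{\mathbb{S}^{d-1}\times\mathbb{S}^{d-1}}\langle x_1,x_2\rangle^{m}\,dx_1dx_2$). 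A careful count gives
\[
\mathbb{E}\|F_q\|^4-\mathbb{E}\|Z_q\|^4
=\frac{J_q^4}{n\,(q!)^2}\Bigl[\,3\sum_{q_1=0}^{q-1}\Upsilon_{q_1,q}\,I_{2(q-q_1)}+2(q!)^2\bigl(1-I_{2q}\bigr)\Bigr],
\]
which one can check directly for $q=1$, where $\operatorname{Cum}(Va,Va,Vb,Vb)=3(1+2\rho^2)-1-2\rho^2=2+4\rho^2$ with $\rho=\langle x_1,x_2\rangle$. The lemma as stated, and the paper's diagram count (which in fact tallies \emph{all} non-flat diagrams with $q_1$ edges between the first two rows rather than only the connected ones), omit these same extra pieces; they are of the same order as the leading sum and are therefore harmless for the subsequent $\lesssim$ estimates, but you should not expect your honest bookkeeping to reproduce the displayed identity verbatim.
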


\begin{proof}
We will write $\operatorname{Cum}(\cdot,\cdot,\cdot,\cdot)$ for the joint cumulant of four
random variables, that is,
\begin{equation*}
\operatorname{Cum}(X,Y,Z,W)=\mathbb{E}\left[ XYZW\right] -\mathbb{E}\left[ XY\right]
\mathbb{E}\left[ WZ\right] -\mathbb{E}\left[ XZ\right] \mathbb{E}\left[ WY%
\right] -\mathbb{E}\left[ XW\right] \mathbb{E}\left[ ZY\right] \text{ .}
\end{equation*}%

We have
\begin{align*}
\mathbb{E}\left\Vert F_{q}\right\Vert ^{4}
&= \frac{1}{n^{2}}\frac{J_{q}^{4}}{(q!)^{2}}%
\sum_{j_{1},j_{2},j_{3},j_{4}=1}^{n}\int_{\mathbb{S}^{d-1}\times \mathbb{S}%
^{d-1}} \\
& \mathbb{E} \left\{
V_{j_{1}}H_{q}(W_{j_{1}}x_{1}) V_{j_{2}}H_{q}(W_{j_{2}}x_{1})V_{j_{3}}H_{q}(W_{j_{3}}x_{2})V_{j_{4}}H_{q}(W_{j_{4}}x_{2}) \right\} dx_{1}dx_{2} \\
&= \frac{1}{n}\frac{J_{q}^{4}}{(q!)^{2}}\int_{\mathbb{S}^{d-1}\times \mathbb{S}%
^{d-1}} \\
& \operatorname{Cum}\left\{
V_{j}H_{q}(W_{j}x_{1}),V_{j}H_{q}(W_{j}x_{1}),V_{j}H_{q}(W_{j}x_{2}),V_{j}H_{q}(W_{j}x_{2})\right\} dx_{1}dx_{2} \\
& + \frac{J_{q}^{4}}{(q!)^{2}}\left\{ \int_{\mathbb{S}^{d-1}}\mathbb{E}\left\{
V_{j}H_{q}(W_{j}x_{1})V_{j}H_{q}(W_{j}x_{1})\right\} dx_{1}\right\} ^{2} \\
& + 2\frac{J_{q}^{4}}{(q!)^{2}}\int_{\mathbb{S}^{d-1}\times \mathbb{S}%
^{d-1}}\left\{ \mathbb{E}\left\{ V_{j}H_{q}(W_{j}x_{1})V_{j}H_{q}(W_{j}x_{2})\right\}
\right\} ^{2}dx_{1}dx_{2}\text{ .}
\end{align*}%
Now note that, in view of the normalization we adopted for the volume of $%
\mathbb{S}^{d-1}$,%
\begin{align*}
& \frac{J_{q}^{4}}{(q!)^{2}}\left\{ \int_{\mathbb{S}^{d-1}}\mathbb{E}\left\{
V_{j}H_{q}(W_{j}x_{1})V_{j}H_{q}(W_{j}x_{1})\right\} dx_{1}\right\}
^{2}=J_{q}^{4}\text{ ,} \\
& 2\frac{J_{q}^{4}}{(q!)^{2}}\int_{\mathbb{S}^{d-1}\times \mathbb{S}%
^{d-1}}\left\{ \mathbb{E}\left\{ V_{j}H_{q}(W_{j}x_{1})V_{j}H_{q}(W_{j}x_{2})\right\}
\right\} ^{2}dx_{1}dx_{2} \\
= \ & 2J_{q}^{4}\int_{\mathbb{S}^{d-1}\times \mathbb{S}^{d-1}}\left\langle
x_{1},x_{2}\right\rangle ^{2q}dx_{1}dx_{2}\text{ .}
\end{align*}%
Moreover,%
\begin{eqnarray*}
&&\int_{\mathbb{S}^{d-1}\times \mathbb{S}^{d-1}}\mathbb{E}\left\{
Z_{q}^{2}(x_{1})Z_{q}^{2}(x_{2})\right\} dx_{1}dx_{2} \\
&=&\int_{\mathbb{S}^{d-1}\times \mathbb{S}^{d-1}}\mathbb{E}\left\{
Z_{q}^{2}(x_{1})\right\} \mathbb{E}\left\{ Z_{q}^{2}(x_{2})\right\} dx_{1}dx_{2} \\
&&+2\int_{\mathbb{S}^{d-1}\times \mathbb{S}^{d-1}}\mathbb{E}\left\{
Z_{q}(x_{1})Z_{q}(x_{2})\right\} \mathbb{E}\left\{ Z_{q}(x_{1})Z_{q}(x_{2})\right\}
dx_{1}dx_{2} \\
&=& J_{q}^{4}+2J_{q}^{4}\int_{\mathbb{S}^{d-1}\times \mathbb{S}%
^{d-1}}\left\langle x_{1},x_{2}\right\rangle ^{2q}dx_{1}dx_{2}\text{ .}
\end{eqnarray*}%
Hence,%
\begin{align*}
\mathbb{E}\left\Vert F_{q}\right\Vert ^{4}-\mathbb{E}\left\Vert
Z_{q}\right\Vert ^{4} = & \frac{1}{n}\frac{J_{q}^{4}}{(q!)^{2}} \int_{\mathbb{S}^{d-1} \times \mathbb{S}%
^{d-1}} \\
& \operatorname{Cum}\left\{
V_{1}H_{q}(W_{1}x_{1}),V_{1}H_{q}(W_{1}x_{1}),V_{1}H_{q}(W_{1}x_{2}),V_{1}H_{q}(W_{1}x_{2})\right\} dx_{1}dx_{2}%
\text{ .}
\end{align*}%
Using the diagram formula for Hermite polynomials \cite[Proposition 4.15]{MaPeCUP}
and then isotropy, for $%
q_{1}+q_{2}+q_{3}+q_{4}=2q$ we have
\begin{eqnarray*}
&& \int_{\mathbb{S}^{d-1}\times \mathbb{S}^{d-1}}\operatorname{Cum}\left\{
V_{1}H_{q}(W_{1}x_{1}),V_{1}H_{q}(W_{1}x_{1}),V_{1}H_{q}(W_{1}x_{2}),V_{1}H_{q}(W_{1}x_{2})\right\} dx_{1}dx_{2} \\
&=&\sum_{q_{1}+q_{2}+q_{3}+q_{4}=2q}\Upsilon _{q_{1}q_{2}q_{3}q_{4}}\int_{%
\mathbb{S}^{d-1}\times \mathbb{S}^{d-1}}\left\langle
x_{1},x_{1}\right\rangle ^{q_{1}}\left\langle x_{1},x_{2}\right\rangle
^{q_{2}}\left\langle x_{2},x_{2}\right\rangle ^{q_{3}}\left\langle
x_{2},x_{1}\right\rangle ^{q_{4}}dx_{1}dx_{2} \\
&=&\sum_{q_{1}=0}^{q-1}\Upsilon _{q_{1},q}\int_{\mathbb{S}^{d-1}\times
\mathbb{S}^{d-1}}\left\langle x_{1},x_{2}\right\rangle
^{2(q-q_{1})}dx_{1}dx_{2}\text{ ,}
\end{eqnarray*}%
where $\Upsilon _{q_{1}q_{2}q_{3}q_{4}},\Upsilon _{q_{1},q}$ count the
possible configurations of the diagrams.
Precisely, $\Upsilon _{q_{1},q}$ is the number of connected diagrams with no flat edges
between four rows of $q$ nodes each and $q_{1}<q$ connections between
first and second row.
To compute this number explicitly,
let us label the nodes of the diagram as%
\begin{equation*}
\begin{array}{ccccccccc}
x_{1} & x_{1} & x_{1} & ... &  &  &  &  & x_{1} \\
x_{1}^{\prime } & x_{1}^{\prime } & x_{1}^{\prime } & ... &  &  &  &  &
x_{1}^{\prime } \\
x_{2} & x_{2} & x_{2} & ... &  &  &  &  & x_{2} \\
x_{2}^{\prime } & x_{2}^{\prime } & x_{2}^{\prime } & ... &  &  &  &  &
x_{2}^{\prime }%
\end{array}%
\text{ .}
\end{equation*}%
Because there cannot be flat edges, the number of edges between $x_{1}$ and $%
x_{1}^{\prime }$ is the same as the number of edges between $x_{2}$ and $%
x_{2}^{\prime }.$ Indeed, assume that the former was larger than the latter;
then there would be less edges starting from the pair $(x_{1},x_{1}^{\prime
})$ and reaching the pair $(x_{2},x_{2}^{\prime })$ than the other way
round, which is obviously absurd. There are $\binom{q}{q_{1}}$ ways to
choose the nodes of the first row connected with the second, $\binom{q}{q_{1}%
}$ ways to choose the nodes of the second connected with the first, $\binom{q%
}{q_{1}}$ ways to choose the nodes of the third connected with the fourth,
and $\binom{q}{q_{1}}$ ways to choose the nodes of the fourth connected with
the third, which gives a term of cardinality $\binom{q}{q_{1}}^{4};$ the
number of ways way to match the nodes between first and second row or third
and fourth is $(q_{1}!)^{2}.$ There are now $(2q-2q_{1})$ nodes left in the
first two rows, which can be matched in any arbitrary way with the $%
(2q-2q_{1})$ remaining nodes of the third and the fourth row; the result follows
immediately.
\end{proof}

\begin{lemma}
\label{BoundCountingLemma} The following bound holds true:
\begin{equation*}
\max_{0\leq q_{1}\leq q-1}\Upsilon _{q_{1},q}\lesssim \frac{(q!)^{2} 3^{2q}}{q}
\text{ .}
\end{equation*}
\end{lemma}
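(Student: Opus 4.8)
The plan is to bound the quantity $\Upsilon_{q_1,q}=\binom{q}{q_1}^4 (q_1!)^2 (2q-2q_1)!$ by manipulating factorials and then locating the maximum over $q_1\in\{0,\dots,q-1\}$. First I would rewrite $\binom{q}{q_1}^4(q_1!)^2 = \frac{(q!)^4}{(q_1!)^2((q-q_1)!)^4}$, so that
\[
\Upsilon_{q_1,q} = \frac{(q!)^4\,(2q-2q_1)!}{(q_1!)^2\,((q-q_1)!)^4}.
\]
Setting $k:=q-q_1$ (so $1\le k\le q$), this is $\Upsilon = (q!)^2 \cdot \frac{(q!)^2 (2k)!}{((q-k)!)^2 (k!)^4}$. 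Now I would recognize combinatorial factors: $\frac{(2k)!}{(k!)^2}=\binom{2k}{k}\le 4^k$, and $\frac{(q!)^2}{((q-k)!)^2(k!)^2}=\binom{q}{k}^2\le \binom{q}{k}\cdot 2^q \le \ldots$ — more carefully, I want the exponential base to come out as $3$, not something larger, so I should not be wasteful. The cleanest route is to write
\[
\Upsilon_{q_1,q}=(q!)^2\binom{q}{k}^2\binom{2k}{k}\frac{1}{(k!)^{2}}\cdot\frac{(k!)^2(2k)!^{-1}(2k)!}{\cdots}
\]
— rather than chase this, I would instead directly estimate $\frac{\Upsilon_{q_1,q}}{(q!)^2}=\binom{q}{k}^2\binom{2k}{k}$ after noting $\frac{(q!)^2(2k)!}{((q-k)!)^2(k!)^4} = \binom{q}{k}^2\binom{2k}{k}$, and then bound $\binom{q}{k}^2\binom{2k}{k}\le \binom{q}{k}^2 4^k$.

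The key point is then to show that $g(k):=\binom{q}{k}^2 4^k$, or more precisely the exact expression $\binom{q}{k}^2\binom{2k}{k}$, is maximized (up to a $1/q$ factor) essentially at the endpoint $k=q$, i.e. $q_1=0$, where it equals $\binom{2q}{q}\sim \frac{4^q}{\sqrt{\pi q}}$. Wait — $4^q$ is larger than $3^{2q}=9^q$? No: $3^{2q}=9^q > 4^q$, so there is room. At $k=q$ we get exactly $\binom{2q}{q}\le \frac{4^q}{1}\le \frac{9^q}{q}$ for $q$ large, which is comfortably within the claimed bound. For interior $k$, I would show the ratio $g(k+1)/g(k)=\left(\frac{q-k}{k+1}\right)^2\cdot\frac{(2k+1)(2k+2)}{(k+1)^2}=\left(\frac{q-k}{k+1}\right)^2\cdot\frac{2(2k+1)}{k+1}$ and analyze its monotonicity, or — simpler — use the crude bounds $\binom{q}{k}\le 2^q$ and $\binom{2k}{k}\le 4^k\le 4^q$, which gives $\binom{q}{k}^2\binom{2k}{k}\le 4^q 4^q=16^q$, too weak. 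So the crude bound does not suffice and one genuinely needs a sharper argument; this is the main obstacle. The sharp argument is to use $\binom{q}{k}^2\binom{2k}{k}\le\binom{q}{k}\binom{q}{k}\binom{2k}{k}$ and apply Vandermonde-type / Stirling estimates: writing everything via Stirling, $\log\big(\binom{q}{k}^2\binom{2k}{k}\big)= q\,\phi(k/q)\log 2 + O(\log q)$ for an explicit concave $\phi$, and maximizing $\phi$ over $[0,1]$ to check that $\max\phi = \log_2 9 = 2\log_2 3$, attained at $k/q$ equal to an interior point; the $\sqrt{q}$-type Stirling corrections then produce the extra $1/q$.

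Concretely, the cleanest self-contained path I would write up: use the identity $\binom{q}{k}^2\binom{2k}{k}=\binom{q}{k}\binom{q}{k}\binom{2k}{k}$ together with the elementary inequality
\[
\binom{q}{k}\binom{2k}{k}\le\binom{q+k}{2k}\binom{2k}{k}\cdot(\text{something})
\]
— if a clean such identity is not available, fall back on Stirling. So the backbone of the proof is: (i) the factorial rewrite $\Upsilon_{q_1,q}=(q!)^2\binom{q}{q-q_1}^2\binom{2(q-q_1)}{q-q_1}$; (ii) Stirling's formula applied to each binomial, turning the problem into maximizing a smooth function $\phi(t)$, $t=(q-q_1)/q\in(0,1]$, whose value is $t\log(1/t)\cdot 0 +\ldots$ — explicitly $\phi(t)=-2\big(t\log t+(1-t)\log(1-t)\big)+2t\log 2$, i.e. $2H(t)+2t\log 2$ in nats where $H$ is binary entropy; (iii) calculus to show $\max_{t\in[0,1]}\phi(t)=2\log 3$, attained at $t=3/4$ (check: $\phi(3/4)=2H(3/4)+\tfrac32\log2$; $H(3/4)=\tfrac34\log\tfrac43+\tfrac14\log4=\log4-\tfrac34\log3=2\log2-\tfrac34\log3$, so $2H(3/4)=4\log2-\tfrac32\log3$, and $\phi(3/4)=4\log2-\tfrac32\log3+\tfrac32\log2$ — that is not $2\log3$, so the constant $3/2$ in $c_{p,q}$ and the base $3$ must come from a slightly different grouping; I would recompute with the grouping that the diagram bound naturally produces). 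I expect the main obstacle to be exactly this bookkeeping: getting the exponential base to be precisely $3^{2q}$ and not merely $C^q$ for some larger $C$, which forces one to track the Stirling estimates with the correct constants rather than using lossy bounds like $\binom{n}{k}\le 2^n$. The polynomial prefactor $1/q$ then falls out of the $(2\pi k)^{-1/2}$-type Stirling corrections at the maximizing $t$.
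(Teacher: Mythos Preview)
Your approach is essentially the paper's: rewrite $\Upsilon_{q_1,q}/(q!)^2=\binom{q}{k}^2\binom{2k}{k}$ with $k=q-q_1$, apply Stirling, and maximize the resulting exponential rate in $t=k/q$. Your entropy functional $\phi(t)=2H(t)+2t\log 2$ is correct. The only slip is the location of the maximum: solving $\phi'(t)=2\log\frac{2(1-t)}{t}=0$ gives $t=2/3$, not $t=3/4$; and indeed $\phi(2/3)=2\bigl(\log 3-\tfrac{2}{3}\log 2\bigr)+\tfrac{4}{3}\log 2=2\log 3$, which is exactly the base $3^{2q}$ you were looking for. In the paper's parametrization $\alpha=q_1/q=1-t$ this is $\alpha=1/3$. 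Once you correct that, your sketch goes through, and the Stirling prefactors at an interior maximizer actually give $q^{-3/2}$, comfortably within the claimed $q^{-1}$.

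One structural point you should add: Stirling in the form you use requires both $q_1$ and $q-q_1$ to be of order $q$, so the boundary range (say $q_1<\varepsilon q$, equivalently $t>1-\varepsilon$) needs a separate, cruder estimate. The paper handles this by bounding $\binom{2k}{k}\le 2^{2q}$ there and using monotonicity of $\binom{q}{q_1}$ to reduce to $q_1=\lfloor\varepsilon q\rfloor$, then choosing $\varepsilon$ small enough that the resulting base stays below $3$. Without this, your maximization over $t\in(0,1]$ via Stirling is not quite justified at the endpoint.
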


\begin{proof}
We can write%
\begin{eqnarray*}
\frac{1}{(q!)^{2}} \Upsilon _{q_{1},q} &=& \frac{1}{(q!)^{2}}\binom{q}{q_{1}}^{4}(q_{1}!)^{2}(2q-2q_{1})! \\
&=&\frac{(q!)^{2}(2q-2q_{1})!}{(q_{1}!)^{2}((q-q_{1})!)^{2}((q-q_{1})!)^{2}}=%
\binom{q}{q_{1}}^{2}\binom{2q-2q_{1}}{q-q_{1}}\text{ .}
\end{eqnarray*}%
Note that both the elements in the last expression are decreasing in $q_{1},$
when $q_{1}>\frac{q}{2}$ (say)$.$ Fix $\alpha \in \lbrack \varepsilon ,\frac{%
1}{2}+\varepsilon ],$ $\varepsilon >0;$ repeated use of Stirling's
approximation gives%
\begin{eqnarray*}
&&\frac{(q!)^{2}(2q-2q_{1})!}{(q_{1}!)^{2}((q-q_{1})!)^{2}((q-q_{1})!)^{2}}
\\
&\sim &\frac{1}{(2\pi )^{3/2}}\frac{q^{2q+1}(2q-2q_{1})^{2q-2q_{1}+\frac{1}{2%
}}e^{2q_{1}}e^{2(q-q_{1})}e^{2q-2q_{1}}}{%
e^{2q}e^{2q-2q_{1}}q_{1}^{2q_{1}+1}(q-q_{1})^{4q-4q_{1}+2}} \\
&\sim &\frac{2^{2q-2q_{1}+\frac{1}{2}}}{(2\pi )^{3/2}}\frac{q^{2q+1}}{%
q_{1}^{2q_{1}+1}(q-q_{1})^{2q-2q_{1}+\frac{3}{2}}}
\end{eqnarray*}%
Taking $q_{1}=\alpha q$ we obtain%
\begin{eqnarray*}
&&\frac{2^{2(1-\alpha )q+\frac{1}{2}}}{(2\pi )^{3/2}}\frac{q^{2q+1}}{(\alpha
q)^{2\alpha q+1}((1-\alpha )q)^{2(1-\alpha )q+\frac{3}{2}}} \\
&=&\frac{2^{2(1-\alpha )q+\frac{1}{2}}}{(2\pi )^{3/2}}\frac{1}{(\alpha
)^{2\alpha q+1}((1-\alpha ))^{2(1-\alpha )q+\frac{3}{2}}q^{\frac{3}{2}}} \\
&=&\frac{2^{\frac{1}{2}}}{(2\pi )^{3/2}q^{\frac{3}{2}}}\left(\frac{2^{1-\alpha }}{%
\alpha ^{\alpha +\frac{1}{2q}}(1-\alpha )^{1-\alpha +\frac{3}{4q}}}\right)^{2q}.
\end{eqnarray*}%
It can be immediately checked that the function $f(\alpha ):=\frac{%
2^{1-\alpha }}{\alpha ^{\alpha }(1-\alpha )^{1-\alpha }}$ admits a unique
maximum at $\alpha =\frac{1}{3},$ for which the quantity gets bounded by
$q^{-\frac{3}{2}}3^{2q}$ up to constants.
On the other hand, for $q_{1}<\left\lfloor \varepsilon q\right\rfloor $ it
suffices to notice that%
\begin{eqnarray*}
\binom{q}{q_{1}}^{2}\binom{2q-2q_{1}}{q-q_{1}} &\leq &2^{2q}\binom{q}{q_{1}}%
^{2}\leq 2^{2q}\binom{q}{\left\lfloor \varepsilon q\right\rfloor }^{2} \\
&\leq &2^{2q}\frac{q^{2q+1}}{2\pi (\varepsilon q)^{2\varepsilon
q+1}((1-\varepsilon )q)^{2(1-\varepsilon )q+1}}\text{ ,}
\end{eqnarray*}%
where we used the fact that $g(\varepsilon )=\varepsilon ^{-\varepsilon
}(1-\varepsilon )^{-(1-\varepsilon )}$ is strictly increasing in $(0,\frac{1%
}{2})$; hence we get%
\begin{equation*}
2^{2q}\frac{q^{2q+1}}{2\pi (\varepsilon q)^{2\varepsilon q+1}((1-\varepsilon
)q)^{2(1-\varepsilon )q+1}}=\frac{2^{2q}}{2\pi q}\frac{1}{((\varepsilon
)^{\varepsilon +\frac{1}{2q}}((1-\varepsilon ))^{(1-\varepsilon )+\frac{1}{2q%
}})^{2q}}\text{ .}
\end{equation*}%
The result is proved by choosing $\varepsilon $ such that
\begin{equation*}
((\varepsilon )^{\varepsilon +\frac{1}{2q}}((1-\varepsilon
))^{(1-\varepsilon )+\frac{1}{2q}})^{-1}<\frac{3}{2}\text{ .} \qedhere
\end{equation*}
\end{proof}

We recall the standard definition of the Beta
function $B(\alpha ,\beta )$:
\begin{equation*}
B(\alpha ,\beta )=\frac{\Gamma (\alpha )\Gamma (\beta )}{\Gamma (\alpha
+\beta )} \ , \quad \Gamma (\alpha )=\int_{0}^{\infty }t^{\alpha -1}\exp(-t)dt \ , \quad \alpha ,\beta >0 \ .
\end{equation*}

\begin{lemma}
\label{BetaLemma} We have%
\begin{equation*}
\int_{\mathbb{S}^{d-1}\times \mathbb{S}^{d-1}}\left\langle
x_{1},x_{2}\right\rangle ^{2(q-q_{1})}dx_{1}dx_{2}=\frac{s_{d-1}}{s_{d}}%
B\left(q-q_{1}+\frac{1}{2},\frac{d}{2}-\frac{1}{2}\right) \leq 1 \text{ .}
\end{equation*}
\end{lemma}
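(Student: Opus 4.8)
The plan is to reduce the double integral to a one–dimensional integral by rotational invariance and then recognize it as a Beta integral. By the isotropy of the uniform measure on $\mathbb{S}^{d-1}$, for every fixed $x_{2}\in\mathbb{S}^{d-1}$ the inner integral $\int_{\mathbb{S}^{d-1}}\langle x_{1},x_{2}\rangle^{2(q-q_{1})}\,dx_{1}$ is independent of $x_{2}$; hence it equals $\int_{\mathbb{S}^{d-1}}\langle x_{1},e\rangle^{2(q-q_{1})}\,dx_{1}$ for any fixed unit vector $e$, and the whole double integral equals this single quantity. Next I would recall that, under the \emph{unit-volume} uniform measure on $\mathbb{S}^{d-1}$, the coordinate $t:=\langle x_{1},e\rangle\in[-1,1]$ has density $\tfrac{s_{d-1}}{s_{d}}(1-t^{2})^{(d-3)/2}$, which follows from the coarea formula by writing $x_{1}=(t,\sqrt{1-t^{2}}\,\omega)$ with $\omega\in\mathbb{S}^{d-2}$, together with the normalization identity $\int_{-1}^{1}(1-t^{2})^{(d-3)/2}\,dt=B(\tfrac12,\tfrac{d-1}{2})=s_{d}/s_{d-1}$ (this is precisely the $q-q_{1}=0$ instance of the claimed formula, and can serve as a consistency check).

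Having done this, one is left with the elementary integral
\[
\int_{\mathbb{S}^{d-1}}\langle x_{1},e\rangle^{2(q-q_{1})}\,dx_{1}=\frac{s_{d-1}}{s_{d}}\int_{-1}^{1}t^{2(q-q_{1})}(1-t^{2})^{(d-3)/2}\,dt .
\]
Since the integrand is even, the substitution $u=t^{2}$ turns the right-hand integral into $\int_{0}^{1}u^{(q-q_{1})-1/2}(1-u)^{(d-3)/2}\,du=B\!\left(q-q_{1}+\tfrac12,\tfrac{d-1}{2}\right)$, which gives the asserted closed form $\tfrac{s_{d-1}}{s_{d}}B\!\left(q-q_{1}+\tfrac12,\tfrac{d}{2}-\tfrac12\right)$; note that both Beta arguments are positive (for $d\ge 2$ and any $q_{1}\le q$), so this is well defined.

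For the inequality $\le 1$ no explicit evaluation is needed: by Cauchy--Schwarz $|\langle x_{1},x_{2}\rangle|\le 1$, and $2(q-q_{1})\ge 0$, so the integrand is bounded by $1$; integrating against the product of two probability measures (the normalized uniform measures on the two copies of $\mathbb{S}^{d-1}$) yields a value $\le 1$. There is essentially no serious obstacle in this proof; the only point demanding a little attention is bookkeeping with the unit-volume normalization of the sphere rather than the surface-area normalization $s_{d}$, which is exactly what produces the prefactor $s_{d-1}/s_{d}$ in the statement.
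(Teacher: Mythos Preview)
Your proof is correct and follows essentially the same route as the paper: fix one point by rotational invariance, reduce to the one-dimensional integral of $t^{2(q-q_1)}(1-t^2)^{(d-3)/2}$, and substitute $u=t^2$ to recognize the Beta function. Your justification of the bound $\le 1$ via $|\langle x_1,x_2\rangle|\le 1$ and the unit-mass normalization is in fact more explicit than the paper's, which simply asserts the inequality.
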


\begin{proof}
Fixing a pole and switching to spherical coordinates, we get%
\begin{eqnarray*}
&&\int_{\mathbb{S}^{d-1}\times \mathbb{S}^{d-1}}\left\langle
x_{1},x_{2}\right\rangle ^{2q-2q_{1}}dx_{1}dx_{2} \\
&=&\frac{s_{d-1}}{s_{d}}\int_{0}^{\pi }(\cos \theta )^{2q-2q_{1}}(\sin
\theta )^{d-2}d\theta \\
&=&\frac{s_{d-1}}{s_{d}}\int_{0}^{\pi /2}(\cos ^{2}\theta )^{q-q_{1}-\frac{1%
}{2}}(1-\cos ^{2}\theta )^{\frac{d-3}{2}}d\cos \theta \\
&=&\frac{s_{d-1}}{s_{d}}\int_{0}^{1}t^{q-q_{1}-\frac{1}{2}}(1-t)^{\frac{d-3}{%
2}}dt=\frac{s_{d-1}}{s_{d}}B\left(q-q_{1}+\frac{1}{2},\frac{d-1}{2}\right)\text{ ,}
\end{eqnarray*}
which is smaller than $1$ for all $d,q$.
\end{proof}

\begin{remark}
The bound we obtain is actually uniform over $d.$ It is likely that it could
be further improved for growing numbers of $d,$ because the Beta function
decreases quickly as $d$ diverges.
\end{remark}

\begin{lemma} \label{lem:EFp4}
We have
\begin{equation*}
\mathbb{E}\left\Vert F_{p}\right\Vert ^{4}\leq \mathbb{E}\left\Vert
F_{p}\right\Vert ^{4}-\mathbb{E}\left\Vert Z_{p}\right\Vert ^{4}+3J_{p}^{4}%
\mathbb{\ }\text{\ .}
\end{equation*}%
\end{lemma}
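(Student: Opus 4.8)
The plan is to compute $\mathbb{E}\|Z_p\|^4$ exactly and show it equals $3J_p^4$, so that the claimed inequality becomes a trivial identity (in fact an equality). From the displayed computation already performed for the Gaussian field in the proof of Lemma \ref{M(F)}, we have
\begin{equation*}
\mathbb{E}\|Z_p\|^4 = \int_{\mathbb{S}^{d-1}\times\mathbb{S}^{d-1}}\mathbb{E}\{Z_p^2(x_1)Z_p^2(x_2)\}\,dx_1dx_2 = J_p^4 + 2J_p^4\int_{\mathbb{S}^{d-1}\times\mathbb{S}^{d-1}}\langle x_1,x_2\rangle^{2p}\,dx_1dx_2 \ .
\end{equation*}

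The only point requiring care is that this expression is \emph{not} in general equal to $3J_p^4$: by Lemma \ref{BetaLemma} the integral $\int\langle x_1,x_2\rangle^{2p}dx_1dx_2$ is strictly less than $1$ for $p\geq 1$ (and equals $1$ only for $p=0$). So one cannot hope for an identity. Instead I would argue by a direct bound: since $\int\langle x_1,x_2\rangle^{2p}dx_1dx_2 \le 1$ by Lemma \ref{BetaLemma}, we get $\mathbb{E}\|Z_p\|^4 \le J_p^4 + 2J_p^4 = 3J_p^4$. Rearranging gives exactly $\mathbb{E}\|F_p\|^4 \le \big(\mathbb{E}\|F_p\|^4 - \mathbb{E}\|Z_p\|^4\big) + 3J_p^4$, which is the claim. (Trivially $\mathbb{E}\|F_p\|^4 = \big(\mathbb{E}\|F_p\|^4 - \mathbb{E}\|Z_p\|^4\big) + \mathbb{E}\|Z_p\|^4$, and then one replaces $\mathbb{E}\|Z_p\|^4$ by its upper bound $3J_p^4$.)

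There is essentially no obstacle here: the lemma is a bookkeeping step that packages the trivial decomposition $\mathbb{E}\|F_p\|^4 = (\mathbb{E}\|F_p\|^4 - \mathbb{E}\|Z_p\|^4) + \mathbb{E}\|Z_p\|^4$ together with the elementary moment bound $\mathbb{E}\|Z_p\|^4 \le 3J_p^4$ for the $L^2(\mathbb{S}^{d-1})$-norm of a centered Gaussian field with unit-variance marginals scaled by $J_p$. The slight subtlety worth flagging is simply that one should use the inequality from Lemma \ref{BetaLemma} rather than an identity, since the fourth moment of $\|Z_p\|$ genuinely depends on $d$ and $p$ through the Beta function and only the \emph{bound} $3J_p^4$ is uniform. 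Combined with Lemma \ref{M(F)} and Lemma \ref{BoundCountingLemma}, this then yields the stated bound $\mathbb{E}\|F_p\|^4 \lesssim J_p^4 3^{2p}/n + J_p^4$ used in the proof of Proposition \ref{prop:M(F<=Q)}.
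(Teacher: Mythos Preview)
Your proposal is correct and follows essentially the same approach as the paper: both use the formula $\mathbb{E}\|Z_p\|^4 = J_p^4 + 2J_p^4\int_{\mathbb{S}^{d-1}\times\mathbb{S}^{d-1}}\langle x_1,x_2\rangle^{2p}\,dx_1dx_2$ from the computation in Lemma~\ref{M(F)}, bound the integral by $1$, and conclude $\mathbb{E}\|Z_p\|^4\leq 3J_p^4$, from which the stated inequality follows by trivial rearrangement.
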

\begin{proof}
It suffices to observe that, following the calculations of Lemma \ref{M(F)},
\begin{equation*}
\mathbb{E}\left\Vert Z_{q}\right\Vert ^{4}=J_{q}^{4}+2J_{q}^{4}\int_{\mathbb{%
S}^{d-1}\times \mathbb{S}^{d-1}}\left\langle x_{1},x_{2}\right\rangle
^{2q}dx_{1}dx_{2}\leq 3J_{q}^{4} \text{ .} \qedhere
\end{equation*}
\end{proof}

\subsection{Bounding $C(F_{\leq Q})$} \label{sec:C(F)}


The following results reduces the problem of bounding $C(F_{\leq Q})$ to that of bounding $M(F_{\leq Q})$.

\begin{proposition}
\label{C(F)} We have
\begin{equation*}
C(F_{\leq Q})\leq M(F_{\leq Q})\text{ .}
\end{equation*}
\end{proposition}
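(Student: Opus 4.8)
The plan is to compare $C(F_{\leq Q})$ and $M(F_{\leq Q})$ term by term, exploiting the fact that both sums carry the same coefficients $c_{p,q}$ on the off-diagonal, and that the covariance $\operatorname{Cov}(\|F_p\|^2,\|F_q\|^2)$ is, for $p\neq q$, controlled by the same fourth-order diagram quantities that appear in the definition of $M(F_{\leq Q})$. First I would compute $\operatorname{Cov}(\|F_p\|^2,\|F_q\|^2)$ explicitly by expanding both squared norms into sums over the neuron indices, just as in the proof of Lemma \ref{M(F)}. Since $F_p$ and $F_q$ lie in distinct Wiener chaoses, the product $\mathbb{E}[\|F_p\|^2\|F_q\|^2]$ decomposes, via the diagram formula, into the ``disconnected'' contribution $\mathbb{E}\|F_p\|^2\,\mathbb{E}\|F_q\|^2$ plus a genuinely connected remainder; subtracting $\mathbb{E}\|F_p\|^2\,\mathbb{E}\|F_q\|^2$ leaves exactly that connected piece. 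Because $p\neq q$ forces any connecting diagram to use at least one cross-edge between the $F_p$-nodes and the $F_q$-nodes, the surviving term is again $O(1/n)$ (one free neuron index rather than two), analogous to the $\mathbb{E}\|F_q\|^4-\mathbb{E}\|Z_q\|^4$ computation.

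Next, I would bound the resulting diagram sum. The combinatorial count of connected, flat-edge-free diagrams between a block of $2p$ nodes and a block of $2q$ nodes, with prescribed numbers of internal and cross edges, is dominated by the same type of multinomial/central-binomial estimates handled in Lemma \ref{BoundCountingLemma}; together with Lemma \ref{BetaLemma}, which bounds each spherical integral of $\langle x_1,x_2\rangle$ to a positive power by $1$, this yields $|\operatorname{Cov}(\|F_p\|^2,\|F_q\|^2)| \lesssim \frac{1}{n} J_p^2 J_q^2\, 3^{p+q}$ (or the appropriate sharpening with a factor of $\min(p,q)$ or $\sqrt{pq}$). One then compares this with the generic summand of $M(F_{\leq Q})$: by Cauchy--Schwarz, $\sqrt{\mathbb{E}\|F_p\|^4\,(\mathbb{E}\|F_q\|^4-\mathbb{E}\|Z_q\|^4)}$ is at least of order $\frac{1}{n} J_p^2 J_q^2 \,3^{p+q}$, with the same $c_{p,q}$ in front, and the diagonal terms of $M$ (which carry the extra $1+\sqrt3$ weight and the $\mathbb{E}\|F_p\|^4 \geq J_p^4$ contribution) are nonnegative and have no counterpart in $C$. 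Summing over $p\neq q$ then gives $C(F_{\leq Q}) \leq M(F_{\leq Q})$ directly, absorbing constants into the $\lesssim$; to get the clean inequality without a constant one uses that $M$ retains its $1/\sqrt3$ prefactor and the diagonal slack.

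The main obstacle I anticipate is the bookkeeping of the diagram expansion for the mixed moment $\mathbb{E}[\|F_p\|^2\|F_q\|^2]$: one must carefully enumerate which edge configurations between the four rows (two from $F_p$ of length $p$ each, two from $F_q$ of length $q$ each) are connected and flat-edge-free, keep track of the neuron-index constraints that produce the $1/n$ gain, and verify that the resulting angular integrals are of the form $\int \langle x_1,x_2\rangle^{2k}\,dx_1 dx_2$ with $k\geq 1$ so that Lemma \ref{BetaLemma} applies. A secondary subtlety is handling the asymmetry of $c_{p,q}=\frac{p+q}{2p}$ in $p$ versus $q$: since the covariance is symmetric in $p,q$ while $c_{p,q}$ is not, one should symmetrize or simply note $c_{p,q}+c_{q,p}=\frac{p+q}{2p}+\frac{p+q}{2q} = \frac{(p+q)^2}{2pq} \geq 2$, which is comfortably enough room against the Cauchy--Schwarz lower bound for the corresponding $M$-terms. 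Once these points are settled, the inequality follows without further effort.
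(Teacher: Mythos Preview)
Your overall plan---expand $\operatorname{Cov}(\|F_p\|^2,\|F_q\|^2)$ via the diagram formula, extract the $1/n$ connected piece, and relate it to the fourth-cumulant quantities in $M$---matches the paper's, but the comparison step has a genuine gap. After bounding the $C$-summand by (roughly) $\tfrac{1}{n}J_p^2J_q^2\,3^{p+q}$, i.e.\ after invoking Lemma~\ref{BetaLemma} to replace each spherical integral by $1$, you assert that the corresponding $M$-summand $\sqrt{\mathbb{E}\|F_p\|^4\,(\mathbb{E}\|F_q\|^4-\mathbb{E}\|Z_q\|^4)}$ is \emph{at least} of the same order. No such lower bound is available. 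By Lemma~\ref{M(F)}, the quantity $\mathbb{E}\|F_q\|^4-\mathbb{E}\|Z_q\|^4$ is a sum of terms each carrying a factor $\int_{\mathbb{S}^{d-1}\times\mathbb{S}^{d-1}}\langle x_1,x_2\rangle^{2(q-q_1)}\,dx_1dx_2$, and by Lemma~\ref{BetaLemma} these integrals equal $\tfrac{s_{d-1}}{s_d}B(q-q_1+\tfrac12,\tfrac{d-1}{2})$, which can be made arbitrarily small by taking $d$ (or $q-q_1$) large. The paper only ever proves the \emph{upper} bound $\mathbb{E}\|F_q\|^4-\mathbb{E}\|Z_q\|^4\lesssim J_q^4 3^{2q}/n$; a matching lower bound of that order is false uniformly in $d$. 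Consequently, once you discard the spherical integrals on the $C$ side, you cannot resurrect them on the $M$ side, and the inequality $C\le M$ does not follow from your scheme (nor does the constant-juggling with the $1/\sqrt3$ prefactor or the diagonal slack help, since the discrepancy is $d$-dependent).

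The paper's argument avoids this by comparing the two diagram expansions \emph{before} any spherical integral is estimated. For $q<p$ it shows that the combinatorial weight in the $(p,q)$ covariance expansion is dominated, term by term, by the weight $\Upsilon_{p_1,p}$ in the expansion of $\mathbb{E}\|F_p\|^4-\mathbb{E}\|Z_p\|^4$, via the elementary inequality
\[
\binom{q}{q-p+p_1}^{2}\bigl((q-p+p_1)!\bigr)^{2}=\frac{(q!)^{2}}{((p-p_1)!)^{2}}\le\frac{(p!)^{2}}{((p-p_1)!)^{2}}=\binom{p}{p_1}^{2}(p_1!)^{2},
\]
with the \emph{same} angular integral $\int\langle x_1,x_2\rangle^{2(p-p_1)}$ appearing on both sides. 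Because the spherical factors match exactly, no lower bound on them is ever needed. Your symmetrization remark on $c_{p,q}+c_{q,p}$ is correct and useful, but the core of the argument must be reworked to keep the spherical integrals intact on both sides of the comparison rather than passing through the dimension-free intermediate bound $\tfrac{1}{n}J_p^2J_q^2\,3^{p+q}$.
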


\begin{proof}
We shall show that%
\begin{eqnarray*}
C(F_{\leq Q}) &=&\sum_{p,q:p\neq q}^{Q}c_{p,q}\sum_{p_{1}=p-q}^{p-1}\binom{p%
}{p_{1}}^{2}(p_{1}!)^{2}\binom{q}{q-p+p_{1}}%
^{2}((q-p+p_{1})!)^{2}(2(p-p_{1}))! \\
&&\times \int_{\mathbb{S}^{d-1}\times \mathbb{S}^{d-1}}\left\langle
x_{1},x_{2}\right\rangle ^{p-p_{1}}dx_{1}dx_{2} \\
&\leq& \sum_{p,q}^{Q}\sum_{p_{1}=1}^{p}c_{p,q}\binom{p}{p_{1}}%
^{4}(p_{1}!)^{2}(2p-2p_{1})!\int_{S^{d-1}\times S^{d-1}}\left\langle
x_{1},x_{2}\right\rangle ^{p-p_{1}}dx_{1}dx_{2} \\
&=& M(F_{\leq Q})\text{ .}
\end{eqnarray*}%
Recall that%
\begin{equation*}
\operatorname{Cov}(\left\Vert F_{p}\right\Vert ^{2},\left\Vert F_{q}\right\Vert ^{2})
\end{equation*}%
\begin{equation*}
=\frac{J_{p}^{2}J_{q}^{2}}{n}\int_{\mathbb{S}^{d-1}}\int_{\mathbb{S}%
^{d-1}}\operatorname{Cum}\left\{
VH_{p}(Wx_{1}),VH_{p}(Wx_{1}),VH_{q}(Wx_{2}),VH_{q}(Wx_{2})\right\}
dx_{1}dx_{2}\text{ .}
\end{equation*}%
Indeed,%
\begin{equation*}
\left\Vert F_{p}\right\Vert ^{2}=\frac{J_{p}^{2}}{n}\sum_{j_{1},j_{2}=1}^{n}%
\int_{S^{d-1}}\left\{ V_{j_{1}}H_{p}(W_{j_{1}}x)\right\} \left\{
V_{j_{2}}H_{p}(W_{j_{2}}x)\right\} dx \ ,
\end{equation*}%
and%
\begin{equation*}
\operatorname{Cov}(\left\Vert F_{p}\right\Vert ^{2},\left\Vert F_{q}\right\Vert ^{2})=%
\mathbb{E}(\left\Vert F_{p}\right\Vert ^{2}\left\Vert F_{q}\right\Vert ^{2})-%
\mathbb{E}(\left\Vert F_{p}\right\Vert ^{2})\mathbb{E}(\left\Vert
F_{q}\right\Vert ^{2}) \ ,
\end{equation*}%
where%
\begin{align*}
&\mathbb{E}(\left\Vert F_{p}\right\Vert ^{2}\left\Vert F_{q}\right\Vert ^{2})
=\frac{J_{p}^{2}J_{q}^{2}}{n^{2}}\sum_{j_{1},j_{2}=1}^{n}
\sum_{j_{3},j_{4}=1}^{n}\int_{\mathbb{S}^{d-1}}\int_{\mathbb{S}^{d-1}} \\
&\mathbb{E}\left[
V_{j_{1}}H_{p}(W_{j_{1}}x_{1})V_{j_{2}}H_{p}(W_{j_{2}}x_{1})V_{j_{3}}H_{q}(W_{j_{3}}x_{2})V_{j_{4}}H_{q}(W_{j_{4}}x_{2})%
\right] dx_{1}dx_{2} \\
=\ &\frac{J_{p}^{2}J_{q}^{2}}{n}\int_{\mathbb{S}^{d-1}\times \mathbb{S}%
^{d-1}}\operatorname{Cum}\left[
V_{1}H_{p}(W_{1}x_{1}),V_{1}H_{p}(W_{1}x_{1}),V_{1}H_{q}(W_{1}x_{2}),V_{1}H_{q}(W_{1}x_{2})%
\right] dx_{1}dx_{2} \\
+\ &J_{p}^{2}J_{q}^{2}\int_{\mathbb{S}^{d-1}\times \mathbb{S}^{d-1}}\mathbb{E}%
\left[ V_{1}H_{p}(W_{1}x_{1})V_{1}H_{p}(W_{1}x_{1})\right] \mathbb{E}\left[
V_{1}H_{q}(W_{1}x_{2})V_{1}H_{q}(W_{1}x_{2})\right] dx_{1}dx_{2} \\
+\ &2J_{p}^{2}J_{q}^{2}\int_{\mathbb{S}^{d-1}\times \mathbb{S}^{d-1}}\mathbb{E%
}\left[ V_{1}H_{p}(W_{1}x_{1})V_{1}H_{q}(W_{1}x_{2})\right] \mathbb{E}\left[
V_{1}H_{p}(W_{1}x_{1})V_{1}H_{q}(W_{1}x_{2})\right] dx_{1}dx_{2} \ .
\end{align*}
By the orthogonality of the Hermite polynomials,
the third term vanishes and we are left with%
\begin{align*}
&\frac{J_{p}^{2}J_{q}^{2}}{n}\int_{\mathbb{S}^{d-1}\times \mathbb{S}%
^{d-1}}\operatorname{Cum}\left[
V_{1}H_{p}(W_{1}x_{1}),V_{1}H_{p}(W_{1}x_{1}),V_{1}H_{q}(W_{1}x_{2}),V_{1}H_{q}(W_{1}x_{2})%
\right] dx_{1}dx_{2} \\
+\ &J_{p}^{2}J_{q}^{2}\int_{\mathbb{S}^{d-1}\times \mathbb{S}^{d-1}}\mathbb{E}%
\left[ V_{1}H_{p}(W_{1}x_{1})V_{1}H_{p}(W_{1}x_{1})\right] \mathbb{E}\left[
V_{1}H_{q}(W_{1}x_{2})V_{1}H_{q}(W_{1}x_{2})\right] dx_{1}dx_{2} \\
=\ &\frac{J_{p}^{2}J_{q}^{2}}{n}\int_{\mathbb{S}^{d-1}\times \mathbb{S}%
^{d-1}}\operatorname{Cum}\left[
V_{1}H_{p}(W_{1}x_{1}),V_{1}H_{p}(W_{1}x_{1}),V_{1}H_{q}(W_{1}x_{2}),V_{1}H_{q}(W_{1}x_{2})%
\right] dx_{1}dx_{2} \\
+\ &\left( J_{p}^{2}\int_{\mathbb{S}^{d-1}}\mathbb{E}\left[
V_{1}H_{p}(W_{1}x_{1})V_{1}H_{p}(W_{1}x_{1})\right] dx_{1}\right)
\left( J_{q}^{2}\int_{\mathbb{S}^{d-1}}\mathbb{E}\left[
V_{1}H_{q}(W_{1}x_{2})V_{1}H_{q}(W_{1}x_{2})\right] dx_{2}\right) \\
=\ &\frac{J_{p}^{2}J_{q}^{2}}{n}\int_{\mathbb{S}^{d-1}\times \mathbb{S}%
^{d-1}}\operatorname{Cum}\left[
V_{1}H_{p}(W_{1}x_{1}),V_{1}H_{p}(W_{1}x_{1}),V_{1}H_{q}(W_{1}x_{2}),V_{1}H_{q}(W_{1}x_{2})%
\right] dx_{1}dx_{2} \\
+\ &\mathbb{E}(\left\Vert F_{p}\right\Vert ^{2})\mathbb{E}(\left\Vert
F_{q}\right\Vert ^{2})\text{ .}
\end{align*}%
Indeed,%
\begin{eqnarray*}
\mathbb{E}(\left\Vert F_{p}\right\Vert ^{2}) &=&J_{p}^{2}\mathbb{E}\left[
\sum_{j_{1},j_{2}=1}^{n}\int_{S^{d-1}}\left\{
V_{j_{1}}H_{p}(W_{j_{1}}x)\right\} \left\{
V_{j_{2}}H_{p}(W_{j_{2}}x)\right\} dx\right] \\
&=&J_{p}^{2}\mathbb{E}\left[ n\int_{\mathbb{S}^{d-1}}\left\{
V_{1}H_{p}(W_{1}x)\right\} \left\{ V_{1}H_{p}(W_{1}x)\right\} dx\right]
\text{ .}
\end{eqnarray*}%
Now note that%
\begin{align*}
& \frac{J_{p}^{2}J_{q}^{2}}{n}\int_{\mathbb{S}^{d-1}\times \mathbb{S}^{d-1}}\operatorname{Cum}%
\left[
V_{1}H_{p}(W_{1}x_{1}),V_{1}H_{p}(W_{1}x_{1}),V_{1}H_{q}(W_{1}x_{2}),V_{1}H_{q}(W_{1}x_{2})%
\right] dx_{1}dx_{2} \\
=\ &\frac{J_{p}^{2}J_{q}^{2}}{n}\sum_{p_{1}=p-q}^{p-1}\binom{p}{p_{1}}%
^{2}(p_{1}!)^{2}\binom{q}{q-p+p_{1}}^{2}((q-p+p_{1})!)^{2}(2(p-p_{1}))! \\
\times\ & \int_{\mathbb{S}^{d-1}\times \mathbb{S}^{d-1}}\left\langle
x_{1},x_{2}\right\rangle ^{p-p_{1}}dx_{1}dx_{2}\text{ .}
\end{align*}%
Moreover,
\begin{equation*}
\binom{q}{q-p+p_{1}}^{2}((q-p+p_{1})!)^{2}=\frac{(q!)^{2}}{((p-p_{1})!)^{2}}%
\leq \frac{(p!)^{2}}{((p-p_{1})!)^{2}}=\binom{p}{p_{1}}^{2}(p_{1}!)^{2},
\end{equation*}%
and hence%
\begin{align*}
& \sum_{p_{1}=p-q}^{p-1}\binom{p}{p_{1}}^{2}(p_{1}!)^{2}\binom{q}{q-p+p_{1}}%
^{2}((q-p+p_{1})!)^{2}(2(p-p_{1}))!\int_{\mathbb{S}^{d-1}\times \mathbb{S}%
^{d-1}}\left\langle x_{1},x_{2}\right\rangle ^{p-p_{1}}dx_{1}dx_{2} \\
\leq\ & \sum_{p_{1}=1}^{p}\binom{p}{p_{1}}^{4}(p_{1}!)^{2}(2p-2p_{1})!\int_{%
\mathbb{S}^{d-1}\times \mathbb{S}^{d-1}}\left\langle
x_{1},x_{2}\right\rangle ^{p-p_{1}}dx_{1}dx_{2} \ ,
\end{align*}%
so that our previous bound on the fourth cumulant is sufficient, up to a
factor $\frac{J_{q}^{2}}{J_{p}^{2}}\frac{q!}{p!}\ll 1.$
\end{proof}

\subsection{Proof of Theorem \protect\ref{MainTheorem2}} \label{sec:proofthm2}

The proof of Theorem \ref{MainTheorem2} takes advantage of the tighter
bounds which are obtained in \cite[Section 4]{BourguinCampese}; we refer
to this paper and Section \ref{sec:campese}, together with the monograph \cite%
{NourdinPeccati}, for more details on the notation and further discussion.

Consider the isonormal Gaussian process with underlying Hilbert space
\begin{equation*}
\mathcal{H}:=L^{2}[0,2\pi ]\otimes L^{2}[0,2\pi ]\otimes \mathbb{R}^{d}\text{
;}
\end{equation*}%
we take%
\begin{align*}
V_{j} &= I(f_{V_{j}})=I\left(\frac{\cos (\cdot )}{\sqrt{\pi }}\otimes \frac{\exp
(ij\cdot )}{\sqrt{2\pi }}\otimes z\right)\text{ for some fixed }z\text{ such that }%
\left\Vert z\right\Vert _{\mathbb{R}^{d}}=1\text{ ,} \\
W_{j}x &= I(f_{W_{j}x})=I\left(\frac{\sin (\cdot )}{\sqrt{\pi }}\otimes \frac{%
\exp (ij\cdot )}{\sqrt{2\pi }}\otimes x\right)\text{ for any }x\in \mathbb{S}^{d-1} .
\end{align*}%
It is readily seen that these are two Gaussian, zero mean, unit variance
random variables, with covariances%
\begin{equation*}
\mathbb{E}\left[ V_{j}V_{j^{\prime }}\right] =\delta _{j}^{j^{\prime }}\text{
, }\mathbb{E}\left[ V_{j}W_{j}x\right] =0\text{ , }\mathbb{E}\left[
W_{j}x_{1}W_{j^{\prime }}x_{2}\right] =\delta _{j}^{j^{\prime }}\left\langle
x_{1},x_{2}\right\rangle _{\mathbb{R}^{d}}.\text{ }
\end{equation*}%
Also, we have that%
\begin{eqnarray*}
\frac{1}{\sqrt{n}}\sum_{j=1}^{n}V_{j}\sigma (W_{j}x) &=&\frac{1}{\sqrt{n}}%
\sum_{j=1}^{n}\sum_{p=1}^{\infty }\frac{J_{p}(\sigma )}{\sqrt{p!}}%
I(f_{V_{j}})H_{p}(W_{j}x) \\
&=&\frac{1}{\sqrt{n}}\sum_{j=1}^{n}\sum_{p=1}^{\infty }\frac{J_{p}(\sigma )}{%
\sqrt{p!}}I(f_{V_{j}})I_{p}(f_{W_{j}x}^{\otimes p})\text{ ,}
\end{eqnarray*}%
where we have used the standard identity linking Hermite polynomials and
multiple stochastic integrals (i.e., Theorem 2.7.7 in \cite{NourdinPeccati}%
). To evaluate the term $I(f_{V_{j}})I_{p}(f_{W_{j}x}^{\otimes p}),$ we
recall the product formula \cite[Theorem 2.7.10]{NourdinPeccati}%
\begin{equation*}
I_{p}(f^{\otimes p})I_{q}(g^{\otimes q})=\sum_{r=0}^{p\wedge q}r\binom{p}{r}%
\binom{q}{r}I_{p+q-2r}(f\widetilde{\otimes }_{r}g)\text{ ;}
\end{equation*}%
in our case $p=1,$ $f_{V_{j}}\widetilde{\otimes }_{1}f_{W_{j}x}^{\otimes
p}=0,$ hence we obtain%
\begin{equation*}
\frac{1}{\sqrt{n}}\sum_{j=1}^{n}I(f_{V_{j}})I_{p}(f_{W_{j}x}^{\otimes
p})=I_{p+1}(\frac{1}{\sqrt{n}}\sum_{j=1}^{n}f_{V_{j}}\widetilde{\otimes }%
_{r}f_{W_{j}x}^{\otimes p})\text{ },
\end{equation*}%
where $\widetilde{\otimes }$ denotes the symmetrized tensor product. Let us
now write%
\begin{equation*}
f_{p+1;x}:=\frac{1}{\sqrt{n}}\frac{J_{p}(\sigma )}{\sqrt{p!}}%
\sum_{j=1}^{n}f_{V_{j}}\widetilde{\otimes }_{r}f_{W_{j}x}^{\otimes p}\text{ ;%
}
\end{equation*}%
it can then be readily checked that, for $K=L^{2}(\mathbb{S}^{d-1})$ and $%
r<(p_{1}+1\wedge p_{2}+1)$%
\begin{equation*}
\left\Vert f_{p_{1}+1;x_{1}}\otimes f_{p_{2}+1;x_{2}}\right\Vert _{\mathcal{H%
}^{\otimes (p_{1}+p_{2}-2r)}}^{2}=\frac{1}{n}\frac{J_{p}^{4}(\sigma )}{%
(p!)^{2}}\left\langle x_{1},x_{2}\right\rangle ^{2r}\text{ ,}
\end{equation*}%
\begin{equation*}
\left\Vert f_{p_{1}+1;x_{1}}\otimes f_{p_{2}+1;x_{2}}\right\Vert _{\mathcal{H%
}^{\otimes (p_{1}+p_{2}-2r)}\otimes K^{\otimes 2}}^{2}=\frac{1}{n}\frac{%
J_{p}^{4}(\sigma )}{(p!)^{2}}\int_{\mathbb{S}^{d-1}\times \mathbb{S}%
^{d-1}}\left\langle x_{1},x_{2}\right\rangle ^{2r}dx_{1}dx_{2}\text{ .}
\end{equation*}%
To complete the proof, it is then sufficient to exploit  \cite[Theorem 4.3]{BourguinCampese} and to follow similar steps as in the proof
of Theorem \ref{MainTheorem}.

\appendix

\section{Appendix} \label{Appendix}

\subsection{The quantitative functional central limit theorem by Bourguin
and Campese (2020)} \label{sec:campese}

In this paper, the probabilistic distance for the distance between the
random fields we consider is the so-called $d_{2}$-metric, which is given by%
\begin{equation*}
d_{2}(F,G)=\sup_{\substack{ h\in C_{b}^{2}(K)  \\ \left\Vert h\right\Vert
_{C_{b}^{2}(K)}\leq 1}}\left\vert \mathbb{E}\left[ h(F)\right] -\mathbb{E}%
\left[ h(G)\right] \right\vert \text{ ;}
\end{equation*}%
here, $C_{b}^{2}(K)$ denotes the space of continuous and bounded
applications from the Hilbert space $K$ into $\mathbb{R}$ endowed with two
bounded Frechet derivatives $h^{\prime },h^{\prime \prime }$; that is, for
each $h\in C_{b}^{2}(K)$ there exist a bounded linear operator $h^{\prime
}:K\rightarrow \mathbb{R}$ such that $\left\Vert h^{\prime }\right\Vert
_{K}\leq 1$%
\begin{equation*}
\lim_{\left\Vert v\right\Vert \rightarrow 0}\frac{|h(x+v)-h(x)-h^{\prime
}(v)|}{\left\Vert v\right\Vert }=0\text{ ,}
\end{equation*}%
and similarly for the second derivative.

We will use a simplified version of the results by Bourguin and Campese in
\cite{BourguinCampese}, which we report below.

\begin{theorem}
(A special case of Theorem 3.10 in \cite{BourguinCampese}) Let $F_{\leq
Q}\in L^{2}(\Omega ,K)$ be a Hilbert-valued random element $F_{\leq
Q}:\Omega \rightarrow K$ be a process with zero mean, covariance operator $%
S_{\leq Q}$ and such that it can be decomposed into a finite number of
Wiener chaoses:%
\begin{equation*}
F_{\leq Q}(.)=\sum_{p=0}^{Q}F_{p}(.)\text{ .}
\end{equation*}%
Then, for $Z$ a Gaussian process on the same structure with covariance
operator $S$ we have that%
\begin{equation*}
d_{2}(F_{\leq Q},Z)\leq \frac{1}{2}\sqrt{M(F_{\leq Q})+C(F_{\leq Q})}%
+\left\Vert S-S_{\leq Q}\right\Vert _{L^{2}(\Omega ,\rm{HS})}
\end{equation*}%
where%
\begin{eqnarray*}
M(F_{\leq Q}) &=&\frac{1}{\sqrt{3}}\sum_{p,q}c_{p,q}\sqrt{\mathbb{E}%
\left\Vert F_{p}\right\Vert ^{4}(\mathbb{E}\left\Vert F_{q}\right\Vert ^{4}-%
\mathbb{E}\left\Vert Z_{q}\right\Vert ^{4})} \\
C(F_{\leq Q}) &=&\sum_{\substack{ p,q  \\ p\neq q}}c_{p,q}\operatorname{Cov}(\left\Vert
F_{p}\right\Vert ^{2},\left\Vert F_{q}\right\Vert ^{2})\text{ ,}
\end{eqnarray*}%
$Z_{q}$ a centred Gaussian process with the same covariance operator as $%
F_{q}$ ($\mathbb{E}\left[ Z_{q}(x_{1})Z_{q}(x_{2})\right] =J_{q}^{2}(\sigma
)\left\langle x_{1},x_{2}\right\rangle ^{q}$) and%
\begin{equation*}
c_{p,q}=
\begin{cases}
1+\sqrt{3} & p=q \\
\frac{p+q}{2p} & p\neq q%
\end{cases}%
.
\end{equation*}
\end{theorem}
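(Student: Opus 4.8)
The plan is to obtain this statement as a specialization of the general quantitative functional CLT of Bourguin and Campese, \cite[Theorem 3.10]{BourguinCampese}: invoke that result and unwind its bound in the present setting. Recall the structure of \cite[Theorem 3.10]{BourguinCampese}. One writes the Hilbert-valued element as $F_{\leq Q}=\sum_{p=0}^{Q}F_{p}$ with $F_{p}=I_{p}(f_{p})$ for symmetric kernels $f_{p}$ taking values in $K$, runs the Stein--Malliavin interpolation (the smart-path identity for $\Psi(t)=\mathbb{E}[h(\sqrt{t}\,F_{\leq Q}+\sqrt{1-t}\,Z)]$ followed by integration by parts on Wiener space), and obtains a bound on $d_{2}(F_{\leq Q},Z)$ assembled from two ingredients: a fourth-order term expressed through the $\mathcal{H}$-norms of the contraction kernels $f_{p}\otimes_{r}f_{q}$, $1\le r\le p\wedge q$, and a covariance term equal to the Hilbert--Schmidt distance between the covariance operator of $F_{\leq Q}$ and that of $Z$. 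The task is to match this with the asserted inequality.

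First I would pin down the covariance term. Since $F_{\leq Q}$ has covariance operator $S_{\leq Q}$ whereas $Z$ is prescribed to carry the full covariance $S$, this term is exactly $\|S-S_{\leq Q}\|_{\rm{HS}}$; it is written $\|S-S_{\leq Q}\|_{L^{2}(\Omega,\rm{HS})}$ only because the general theorem permits random covariance operators, and here both operators are deterministic, so the two norms agree.

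Next I would convert the contraction-kernel term into $M(F_{\leq Q})$ and $C(F_{\leq Q})$. This relies on the Stein--Malliavin dictionary for Hilbert-valued chaoses, which is either already carried out in \cite{BourguinCampese} or follows at once from the isometry and product formulas: for a single chaos $F_{q}=I_{q}(f_{q})$ there is an exact identity writing $\mathbb{E}\|F_{q}\|^{4}-\mathbb{E}\|Z_{q}\|^{4}$ as a nonnegative combination of the $\|f_{q}\otimes_{r}f_{q}\|^{2}$, $1\le r\le q-1$ (the Hilbert-valued counterpart of the explicit computation in Lemma \ref{M(F)}), and likewise $\operatorname{Cov}(\|F_{p}\|^{2},\|F_{q}\|^{2})$ for $p\neq q$ is a combination of the mixed contractions $\|f_{p}\otimes_{r}f_{q}\|^{2}$. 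Substituting these backwards into the general bound, using the Cauchy--Schwarz step of \cite{BourguinCampese} that turns a squared sum of contractions into $\sqrt{\mathbb{E}\|F_{p}\|^{4}(\mathbb{E}\|F_{q}\|^{4}-\mathbb{E}\|Z_{q}\|^{4})}$, and collecting terms reproduces the double sums with the diagonal weight $c_{p,p}=1+\sqrt{3}$ (alongside the overall $\tfrac{1}{\sqrt{3}}$ in $M$) and the off-diagonal weight $c_{p,q}=\tfrac{p+q}{2p}$ appearing in both $M$ and $C$.

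The main, and essentially the only, obstacle is bookkeeping: one must keep track of the exact combinatorial constants emerging in \cite[Theorem 3.10]{BourguinCampese} --- the $r$-dependent binomial weights from the product formula, the Stein factors, the passage from operator to Hilbert--Schmidt norms --- and check that they collapse to precisely the stated $c_{p,q}$ rather than to some larger universal multiple, and that the particular isonormal realization used here (two orthogonal slots for the $V_{j}$'s and the $W_{j}$'s, so that the only product-formula contraction ever taken on the $V$-slot is the trivial one) generates no cross terms beyond those already recorded in $M$ and $C$. Once this matching is verified the statement is immediate; alternatively, if one prefers not to cite \cite[Theorem 3.10]{BourguinCampese} as a black box, the same bound can be reconstructed directly by running the interpolation-plus-integration-by-parts argument on $F_{\leq Q}$, as is implicitly done in the proof of Theorem \ref{MainTheorem2}.
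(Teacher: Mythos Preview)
The paper does not give a proof of this theorem at all: it is stated in the Appendix purely as a citation --- ``A special case of Theorem 3.10 in \cite{BourguinCampese}'' --- and is immediately followed by a remark referring the reader to \cite{BourguinCampese} for details. Your proposal, which obtains the statement by specializing the general bound of \cite[Theorem 3.10]{BourguinCampese} and unpacking its constants, is therefore entirely consistent with the paper's treatment; indeed you supply more detail than the paper itself, which simply invokes the external result as a black box.
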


\begin{remark}
The general version of Theorem 3.10 in \cite{BourguinCampese} covers a
broader class of processes which can be expanded into the eigenfunctions of
Markov operators. We do not need this extra generality, and we refer to \cite%
{BourguinCampese} for more discussion and details.
\end{remark}

We will now review another result by \cite{BourguinCampese}, which holds
under tighter smoothness conditions. We shall omit a number of details, for
which we refer to classical references such as \cite{NourdinPeccati}.

Given a Hilbert space $\mathcal{H}$ we recall the isonormal process is the
collection of zero mean Gaussian random variables with covariance function%
\begin{equation*}
\mathbb{E}\left[ X(h_{1})X(h_{2})\right] =\left\langle
h_{1},h_{2}\right\rangle _{\mathcal{H}}.
\end{equation*}%
In our case these random variables take values in the separable Hilbert
space $L^{2}(\Omega ,\mathbb{S}^{d-1}).$ For smooth functions $F:\Omega
\rightarrow L^{2}(\Omega ,\mathbb{S}^{d-1})$ of the form
\begin{equation*}
F=f(W(h_{1}),...,W(h_{p}))\otimes v\text{ },\text{ }f\in C_{b}^{\infty }(%
\mathbb{R}^{p})\text{ , }v\in L^{2}(\Omega ,\mathbb{S}^{d-1})\text{ ,}
\end{equation*}%
we recall that the Malliavin derivative is defined as%
\begin{equation*}
DF=\sum_{i=1}^{p}\partial _{i}f(W(h_{1}),...,W(h_{p}))h_{i}\otimes v
\end{equation*}%
whose domain, denoted by $\mathbb{D}^{1,2},$ is the closure of the space of
smooth functions with respect to the Sobolev norm $\left\Vert F\right\Vert
_{L^{2}(\Omega ,\mathbb{S}^{d-1})}^{2}+\left\Vert DF\right\Vert
_{L^{2}(\Omega ,\mathcal{H\otimes }\mathbb{S}^{d-1})}^{2};$ $\mathbb{D}%
^{1,4} $ is defined analogously.

In this setting, the Wiener chaos decompositions take the form%
\begin{equation*}
F=\sum_{p=1}^{\infty }I_{p}(f_{p})\text{ , }f_{p}\in \mathcal{H}^{\odot
p}\otimes L^{2}(\mathbb{S}^{d-1})\text{ },
\end{equation*}%
where $\mathcal{H}^{\odot p}$ denotes the $p$-fold symmetrized tensor
product of $\mathcal{H},$ see \cite{BourguinCampese}, Subsection 4.1.2. The
main result we are going to exploit is their Theorem 4.3, which we can
recall as follows.

\begin{theorem}
(A special case of Theorem 4.3 in \cite{BourguinCampese}) Let $Z$ be a
centred random element of $L^{2}(\mathbb{S}^{d-1})$ with covariance operator
$S$ and $F\in \mathbb{D}^{1,4}$ with covariance operator $T$ and chaos
decomposition $F=\sum_{p}I_{p}(f_{p}),$ where $f_{p}\in \mathcal{H}^{\odot
p}\otimes L^{2}(\mathbb{S}^{d-1}).$ Then%
\begin{equation*}
d_{2}(F,Z)\leq \frac{1}{2}(\widetilde{M}(F)+\widetilde{C}(F)+\left\Vert
S-T\right\Vert _{\rm{HS}})\text{ ,}
\end{equation*}%
where%
\begin{eqnarray*}
\widetilde{M}(F) &=&\sum_{p=1}^{\infty }\sqrt{\sum_{r=1}^{p-1}\widetilde{%
\Upsilon }_{p,p}^{2}(r)\left\Vert f_{p}\otimes _{r}f_{p}\right\Vert _{%
\mathcal{H}^{\otimes (2p-2r)}\otimes L^{2}(\mathbb{S}^{d-1})^{\otimes 2}}^{2}%
}\text{ ,} \\
\widetilde{C}(F) &=&\sum_{1\leq p,q\leq \infty ,\text{ }p\neq q}^{\infty }%
\sqrt{\sum_{r=1}^{p\wedge q}\widetilde{\Upsilon }_{p,q}^{2}(r)\left\Vert
f_{p}\otimes _{r}f_{q}\right\Vert _{\mathcal{H}^{\otimes (p+q-2r)}\otimes
L^{2}(\mathbb{S}^{d-1})^{\otimes 2}}^{2}}\text{ ,}
\end{eqnarray*}%
and%
\begin{equation*}
\widetilde{\Upsilon }_{p,q}(r)=p^{2}(r-1)!\binom{p-1}{r-1}\binom{q-1}{r-1}%
(p+q-2r)!\text{ .}
\end{equation*}
\end{theorem}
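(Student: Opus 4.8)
The plan is to obtain this statement as the specialization of the abstract functional Stein--Malliavin bound of \cite{BourguinCampese}, their Theorem 4.3, which is phrased for random elements of a Hilbert space $K$ carried by a general symmetric Markov diffusion structure. The first step is to instantiate that framework with the Ornstein--Uhlenbeck structure attached to the isonormal Gaussian process $W$ on $\mathcal H$: the ambient space is the Wiener space, the symmetric generator is $L=-\delta D$, with $D$ the Malliavin derivative and $\delta$ its adjoint, and --- crucially --- the eigenspaces of $-L$ are precisely the Wiener chaoses, the $p$-th chaos being the eigenspace associated with the eigenvalue $p$. One then takes $K=L^{2}(\mathbb S^{d-1})$, so that a $K$-valued element $F\in\mathbb D^{1,4}$ with chaos decomposition $F=\sum_{p}I_{p}(f_{p})$, $f_{p}\in\mathcal H^{\odot p}\otimes L^{2}(\mathbb S^{d-1})$, and a centred Gaussian element $Z$ of $L^{2}(\mathbb S^{d-1})$ with covariance operator $S$, fit directly into the hypotheses of that theorem, with $T$ denoting the covariance operator of $F$.

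The second step is to translate the abstract quantities appearing in \cite[Theorem 4.3]{BourguinCampese} into the standard Malliavin notation. In the Gaussian case the iterated \emph{carr\'e du champ} / gradient operators of the Markov structure collapse to the ordinary contraction operators $\otimes_{r}$ on symmetric tensors, so the abstract fourth-cumulant-type terms become the Hilbert--Schmidt norms $\|f_{p}\otimes_{r}f_{q}\|^{2}_{\mathcal H^{\otimes(p+q-2r)}\otimes L^{2}(\mathbb S^{d-1})^{\otimes 2}}$ displayed in the statement. The one point requiring care is to pin down the combinatorial weights $\widetilde{\Upsilon}_{p,q}(r)=p^{2}(r-1)!\binom{p-1}{r-1}\binom{q-1}{r-1}(p+q-2r)!$: these arise from the product formula for multiple Wiener--It\^{o} integrals together with the spectral weights $p,q$ of $-L$ (the source of the $p^{2}$ factor), and one simply has to verify that they coincide with the constants recorded in \cite{BourguinCampese}. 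Once this dictionary is in place, the inequality $d_{2}(F,Z)\le\frac{1}{2}(\widetilde{M}(F)+\widetilde{C}(F)+\|S-T\|_{\mathrm{HS}})$ with $\widetilde{M},\widetilde{C}$ as displayed is exactly the conclusion of the abstract theorem, the Hilbert--Schmidt term $\|S-T\|_{\mathrm{HS}}$ being carried over verbatim.

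I expect the genuine work to be verification of hypotheses and convergence bookkeeping rather than anything structural. One must confirm that the assumption $F\in\mathbb D^{1,4}$ indeed places $F$ in the domain required by \cite[Theorem 4.3]{BourguinCampese}, that all the contraction norms and the series defining $\widetilde{M}(F)$ and $\widetilde{C}(F)$ are finite, and that the term-by-term manipulations implicit in passing from the abstract to the explicit form are legitimate; making the abstract-to-Gaussian dictionary precise --- in particular checking that the abstract operators really do reduce to the classical contractions and the abstract constants to the $\widetilde{\Upsilon}_{p,q}(r)$ above --- is where the care is needed and is the natural place for the argument to go wrong. As a consistency check one can note that the finite-chaos statement recalled earlier (the special case of \cite[Theorem 3.10]{BourguinCampese}) is recovered by truncation, which matches the constants $c_{p,q}$ there against the $\widetilde{\Upsilon}_{p,q}(r)$ here. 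Beyond this bookkeeping the substance of the statement lies entirely in \cite{BourguinCampese}; the only contribution here is the correct identification of the Gaussian/Malliavin specialization.
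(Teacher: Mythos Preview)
Your proposal is reasonable in approach, but there is nothing to compare it against: the paper does not prove this statement at all. The theorem appears in the Appendix as a background result quoted from \cite{BourguinCampese}; it is stated verbatim as ``A special case of Theorem 4.3 in \cite{BourguinCampese}'' and no derivation is given. The authors simply recall it for later use in the proof of Theorem~\ref{MainTheorem2}.

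Your outline --- instantiate the abstract Markov-diffusion framework of \cite{BourguinCampese} with the Ornstein--Uhlenbeck semigroup on Wiener space, take $K=L^{2}(\mathbb S^{d-1})$, and translate the abstract carr\'e-du-champ quantities into contraction norms via the multiplication formula --- is exactly how such a specialization would be carried out, and is presumably what Bourguin and Campese themselves do (or indicate) in their paper. So your proposal is correct as a sketch of how to obtain the statement, but it goes beyond what the present paper attempts: here the result is imported wholesale, not re-derived.
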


\subsection{The ReLu activation function}

We consider here the most popular activation function, i.e., the standard
ReLu defined by $\sigma (t)=t\mathbb{I}_{[0,\infty )}(t)$. The Hermite
expansion is known to be given by (see for instance \cite{Eldan}, Lemma 17,
or \cite{Klukowski}, Theorem 2 and \cite{Goel,Daniely}):

\begin{equation*}
J_{q}(\sigma )=
\begin{cases}
\frac{1}{\sqrt{2\pi }} & q=0\text{ ,} \\
\frac{1}{2} & q=1\text{ ,} \\
0 & q>1\text{ , }q\text{ odd} \\
\frac{(-1)^{\frac{q}{2}+1}(q-3)!!}{\sqrt{\pi }\sqrt{q!}} & q\text{
even}%
\end{cases}%
.
\end{equation*}%
The following Theorem is standard (compare \cite{Klukowski}), but we include
it for completeness.

\begin{lemma} \label{lem:relu}
As $q\rightarrow \infty $%
\begin{equation*}
J_{q}^{2}\sim \frac{\sqrt{2}}{\sqrt{\pi ^{3}}q^{5/2}}\text{ }.
\end{equation*}
\end{lemma}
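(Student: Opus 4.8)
The plan is to argue directly from the explicit formula for the Hermite coefficients displayed above. Since $J_q(\sigma)=0$ for odd $q>1$, the asymptotic equivalence is to be read along the even integers; accordingly I set $q=2m$ and let $m\to\infty$. For such $q$ the formula gives
\[
J_q^2 = \frac{((q-3)!!)^2}{\pi\, q!} = \frac{((2m-3)!!)^2}{\pi\,(2m)!}\,.
\]

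The first step is to eliminate the double factorials using the standard identities $(2m-1)!!=(2m)!/(2^m m!)$ and $(2m-3)!!=(2m-1)!!/(2m-1)$. Squaring and substituting yields
\[
((2m-3)!!)^2=\frac{((2m)!)^2}{4^m (m!)^2 (2m-1)^2}\,,
\qquad\text{hence}\qquad
J_q^2=\frac{1}{\pi (2m-1)^2}\cdot\frac{1}{4^m}\binom{2m}{m}\,.
\]
The second step is to insert the classical asymptotics for the central binomial coefficient, $\binom{2m}{m}\sim 4^m/\sqrt{\pi m}$ (an immediate consequence of Stirling's formula), together with $(2m-1)^2\sim 4m^2$, to obtain
\[
J_q^2\sim\frac{1}{4\pi m^2}\cdot\frac{1}{\sqrt{\pi m}}=\frac{1}{4\pi^{3/2} m^{5/2}}\,.
\]
Finally, reverting to the variable $q$ via $m=q/2$, so that $m^{5/2}=q^{5/2}/(4\sqrt2)$, gives $J_q^2\sim \sqrt2/(\sqrt{\pi^3}\,q^{5/2})$, which is exactly the asserted equivalence.

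There is no real obstacle here: the argument is entirely elementary once the double-factorial identities are in place, and the only point requiring a little care is bookkeeping of constants — in particular the factor $2^{5/2}$ produced by the change of variables $q=2m$, which is precisely what turns the $1/(4\pi^{3/2})$ into $\sqrt2/\sqrt{\pi^3}$. One could alternatively avoid the central-binomial shortcut and apply Stirling's formula termwise to $((2m-3)!!)^2$ and $(2m)!$ directly; this is slightly messier but leads to the same constant.
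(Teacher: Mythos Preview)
Your proof is correct and follows essentially the same approach as the paper: both start from the explicit formula $J_q^2=((q-3)!!)^2/(\pi\,q!)$ and reduce the asymptotics to Stirling's formula. Your packaging via the central binomial coefficient is a bit tidier than the paper's termwise application of Stirling, but the underlying argument is the same --- indeed you note this alternative yourself in your closing remark.
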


\begin{proof}
The result follows from a straightforward application of Stirling's formula,
which gives%
\begin{eqnarray*}
q! &\sim &\sqrt{2\pi }q^{q+\frac{1}{2}}\exp (-q)\text{ ,} \\
(q-3)!! &=&\frac{(q-3)!}{2^{\frac{q}{2}-2}(\frac{q}{2}-2)!}\sim \frac{%
(q-3)^{q-\frac{5}{2}}\exp (-q+3)}{2^{\frac{q}{2}-2}(\frac{q}{2}-2)^{\frac{q}{%
2}-\frac{3}{2}}\exp (-\frac{q}{2}+2)} \\
&=&\frac{\exp (-\frac{q}{2}+1)}{(q-3)^{1/2}(\frac{q}{2}-2)^{1/2}}(1+\frac{1}{%
q-4})^{\frac{q}{2}-2}(q-3)^{\frac{q}{2}},
\end{eqnarray*}%
so that
\begin{align*}
\frac{((q-3)!!)^{2}}{\pi (q)!} &\sim \frac{\frac{\exp (-q+2)}{(q-3)(\frac{q%
}{2}-2)}(1+\frac{1}{q-4})^{q-4}(q-3)^{q}}{\sqrt{2\pi ^{3}}(q)^{q+\frac{1}{2}%
}\exp (-q)} \\
&\sim \frac{\exp (3)}{\sqrt{2\pi ^{3}}(q-3)(\frac{q}{2}-2)\sqrt{q}}(1-\frac{%
3}{q})^{q}\sim \frac{2^{1/2}}{\sqrt{\pi ^{3}}(q)^{5/2}}\text{ .} \qedhere
\end{align*}
\end{proof}

\begin{remark}
The corresponding covariance kernel is given by, for any $x_{1},x_{2}\in
\mathbb{S}^{d-1}$%
\begin{eqnarray*}
&&\mathbb{E}[\sigma (W^{T}x_{1})\sigma (W^{T}x_{2})] \\
&=&\frac{1}{2\pi }+\frac{\left\langle x_{1},x_{2}\right\rangle }{4}+\frac{%
\left\langle x_{1},x_{2}\right\rangle ^{2}}{4\pi }+\frac{1}{2\pi }%
\sum_{q=2}^{\infty }\frac{((2q-3)!!)^{2}}{(2q)!}\left\langle
x_{1},x_{2}\right\rangle ^{2q} \\
&=&\frac{1}{\pi }(u(\pi -\arccos u)+\sqrt{1-u^{2}})\text{ ,}
\end{eqnarray*}%
for $u=\left\langle x_{1},x_{2}\right\rangle ,$ see also \cite{BachBietti}.
\end{remark}

\begin{remark}
The rate for $J_{q}$ in Lemma \ref{lem:relu} is consistent with the one obtained by \cite%
{Klukowski}. In \cite{Eldan}, $J_{q}^{2}=O(q^{-3})$ is given instead,
yielding in \cite[Theorem 3]{Eldan} the rate
$$
\left( \frac{\log d\times \log \log n}{\log n}\right) .
$$
According to Lemma \ref{lem:relu}, this rate becomes
\begin{equation*}
\left( \frac{\log d\times \log \log n}{\log n}\right) ^{3/4} \ ,
\end{equation*}
which is the one we actually report in Table \ref{savare}.
\end{remark}

\end{document}